\newtheorem{theorem}{Theorem}[section]
\newtheorem{example}{Example}[section]
\newtheorem{lemma}[theorem]{Lemma}
\newtheorem{proposition}[theorem]{Proposition}
\newtheorem{remark}{Remark}[section]
\newtheorem{claim}{Claim}[section]
\newenvironment{proof-sketch}{\noindent{\bf Sketch of Proof}\hspace*{1em}}{\qed\bigskip}
\newcommand{\RR}{\mathbb R}
\renewcommand{\leq}{\leqslant}
\renewcommand{\geq}{\geqslant}
\begin{document}
\title[Positive solutions for nonlinear Neumann problems]{Positive solutions for nonlinear Neumann problems with singular terms and convection}
\author[N.S. Papageorgiou]{Nikolaos S. Papageorgiou}
\address[N.S. Papageorgiou]{ Department of Mathematics,
National Technical University,
				Zografou Campus, 15780 Athens, Greece \& Institute of Mathematics, Physics and Mechanics, Jadranska 19, 1000 Ljubljana, Slovenia}
\email{\tt npapg@math.ntua.gr}
\author[V.D. R\u{a}dulescu]{Vicen\c{t}iu D. R\u{a}dulescu}
\address[V.D. R\u{a}dulescu]{Institute of Mathematics, Physics and Mechanics, 1000 Ljubljana, Slovenia \& Faculty of Applied Mathematics, AGH University of Science and Technology, 30-059 Krak\'ow, Poland \& Institute of Mathematics ``Simion Stoilow" of the Romanian Academy, 
          014700 Bucharest, Romania}
\email{\tt vicentiu.radulescu@imfm.si}
\author[D.D. Repov\v{s}]{Du\v{s}an D. Repov\v{s}}
\address[D.D. Repov\v{s}]{Faculty of Education and Faculty of Mathematics and Physics, University of Ljubljana, 1000 Ljubljana, Slovenia \& Institute of Mathematics, Physics and Mechanics, 1000 Ljubljana, Slovenia}
\email{\tt dusan.repovs@guest.arnes.si}
\keywords{Singular term, convection term, nonlinear regularity, nonlinear maximum principle, Leray-Schauder alternative theorem, fixed point theory\\
\phantom{aa} 2010 AMS Subject Classification: 35B50, 35J75, 35J92, 35P30, 47H10, 58J20}
\begin{abstract}
We consider a nonlinear Neumann problem driven by the $p$-Laplacian. In the reaction term we have the competing effects of a singular and a convection term. Using a topological approach based on the Leray-Schauder alternative principle together with suitable truncation and comparison techniques, we show that the problem has positive smooth solutions.
\end{abstract}   
\maketitle

\section{Introduction}

Let $\Omega\subseteq\RR^N$ be a bounded domain with a $C^2$-boundary $\partial\Omega$. In this paper, we study the following nonlinear Neumann problem with singular and convection terms
\begin{equation}\label{eq1}
	\left\{\begin{array}{l}
		-\Delta_pu(z)+\xi(z)u(z)^{p-1}=u(z)^{-\gamma}+f(z,u(z),Du(z))\ \mbox{in}\ \Omega,\\
\displaystyle		\frac{\partial u}{\partial n}=0\ \mbox{on}\ \partial\Omega,\ u>0,\ 1<p<\infty,\ 0<\gamma<1.
	\end{array}\right\}
\end{equation}

In this problem, $\Delta_p$ denotes the $p$-Laplacian differential operator defined by
$$\Delta_pu={\rm div}\,(|Du|^{p-2}Du)\ \mbox{for all}\ u\in W^{1,p}(\Omega),\ 1<p<\infty.$$

In the reaction
term
(the right-hand side) of the problem, we have the competing effects of the singular term $u^{-\gamma}$ and the convection term $f(z,x,y)$ (that is, the perturbation $f$ depends also on the gradient $Du$). The function $f(z,x,y)$ is Carath\'eodory (that is, for all $(x,y)\in\RR\times\RR^N$ the mapping $z\mapsto f(z,x,y)$ is measurable, and for almost all $z\in\Omega$ the mapping $(x,y)\mapsto f(z,x,y)$ is continuous).

The key feature of this paper is that we do not impose any global growth conditions on the function $f(z,\cdot,y)$. Instead, we assume that $f(z,\cdot,y)$ exhibits a kind of oscillatory behavior near zero. In this way we can employ truncation techniques and avoid any growth condition at $+\infty$. In the boundary condition, $\frac{\partial u}{\partial n}$ denotes the normal derivative of $u$, with $n(\cdot)$ being the outward unit normal on $\partial\Omega$.

The presence of the gradient $Du$ in the perturbation $f$, excludes from consideration a variational approach to dealing with (\ref{eq1}). Instead, our main tool is topological and is based on the fixed point theory, in particular, on the Leray-Schauder principle (see Section 2).

Equations with singular terms and equations with convection terms have been investigated separately, primarily in the context of Dirichlet problems. For singular problems, we mention the works of Giacomoni, Schindler \& Takac \cite{8}, Hirano, Saccon \& Shioji \cite{2}, Papageorgiou \& R\u{a}dulescu \cite{17}, Papageorgiou, R\u{a}dulescu \& Repov\v{s} \cite{21,prrbook}, Papageorgiou \& Smyrlis \cite{22, 23}, Perera \& Zhang \cite{24}, and Su, Wu \& Long \cite{27}. For problems with convection, we mention the works of de Figueiredo, Girardi \& Matzeu \cite{2}, Gasinski \& Papageorgiou \cite{6}, Girardi \& Matzeu \cite{9}, Huy, Quan \& Khanh \cite{14}, Papageorgiou, R\u{a}dulescu \& Repov\v{s} \cite{20}, and Ruiz \cite{26}. Of the aformentioned works, only Gasinski \& Papageorgiou \cite{6} and Papageorgiou, R\u{a}dulescu \& Repov\v{s} \cite{20} go outside the Dirichlet framework and deal with Neumann problems. A good treatment of semilinear parametric elliptic equations with both singular and convection terms and Dirichlet boundary condition can be found in Ghergu \& R\u{a}dulescu \cite[Chapter 9]{7}.

\section{Mathematical background and hypotheses}

As we have already mentioned, our method of proof is topological and is based on the fixed point theory, in particular, on the Leray-Schauder alternative principle.

Let $V,\,Y$ be Banach spaces and $g:V\rightarrow Y$ a map. We say that $g(\cdot)$ is ``compact" if $g(\cdot)$ is continuous and maps bounded sets of $V$ into relatively compact subsets of $Y$.

We now recall the Leray-Schauder alternative principle (see, for example, Gasinski \& Papageorgiou \cite[p. 827]{3}  or Granas \& Dugundji \cite[p. 124]{10}).
\begin{theorem}\label{th1}
	If $X$ is a Banach space and $g:X\rightarrow X$ is compact, then one of the following two statements is true:
	\begin{itemize}
		\item[(a)] $g(\cdot)$ has a fixed point;
		\item[(b)] the set $K(g)=\{u\in X:u=tg(u),\ 0<t<1\}$ is unbounded.
	\end{itemize}
\end{theorem}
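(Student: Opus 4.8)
The plan is to establish the dichotomy in contrapositive form: assuming that the set $K(g)$ is bounded, I will exhibit a fixed point of $g$. The driving tool is the Schauder fixed point theorem --- a compact continuous self-map of a nonempty, closed, bounded, convex subset of a Banach space has a fixed point --- which I would invoke as a known result; the remaining work is to combine it with a radial retraction in order to confine the iteration to a ball.

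First I would pick $M>0$ such that $\|u\|\le M$ for all $u\in K(g)$, fix any $R>M$, and put $\bar B_R=\{u\in X:\|u\|\le R\}$, a nonempty closed bounded convex set. Define the radial retraction $r\colon X\to\bar B_R$ by $r(u)=u$ if $\|u\|\le R$ and $r(u)=Ru/\|u\|$ if $\|u\|>R$; this map is continuous (in fact Lipschitz). Set $h=r\circ g\colon\bar B_R\to\bar B_R$. Since $g$ is compact, $\overline{g(\bar B_R)}$ is compact, hence $h(\bar B_R)\subseteq r\bigl(\overline{g(\bar B_R)}\bigr)$ is relatively compact; together with continuity of $h$ this shows that $h$ is a compact self-map of $\bar B_R$.

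By the Schauder fixed point theorem there exists $u\in\bar B_R$ with $u=h(u)=r(g(u))$. Now I distinguish two cases according to the size of $\|g(u)\|$. If $\|g(u)\|\le R$, then $r(g(u))=g(u)$, so $u=g(u)$ and alternative (a) of the theorem holds. If instead $\|g(u)\|>R$, then $u=r(g(u))=Rg(u)/\|g(u)\|$, so setting $t:=R/\|g(u)\|\in(0,1)$ we obtain $u=t\,g(u)$, i.e.\ $u\in K(g)$; but $\|u\|=R>M$, which contradicts $\|u\|\le M$ for every element of $K(g)$. Hence the second case is impossible and $g$ must have a fixed point, completing the argument.

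The step I expect to be the crux is this reduction to Schauder's theorem through the retraction: one must check carefully that $h$ inherits compactness from $g$ (it does, since a continuous map sends relatively compact sets to relatively compact sets) and that the borderline value $\|g(u)\|=R$ causes no trouble (it is absorbed into the first case and yields $u=g(u)$). Everything else is routine bookkeeping with norms. If one wished to avoid using Schauder's theorem as a black box, the same scheme goes through with Brouwer's theorem after approximating $h$ by continuous maps with finite-dimensional range --- but that would amount to reproving Schauder en route.
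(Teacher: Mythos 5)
Your argument is correct: the radial retraction $r$ onto $\bar B_R$ composed with $g$ gives a compact self-map of a closed bounded convex set, Schauder yields a fixed point of $r\circ g$, and boundedness of $K(g)$ rules out the case $\|g(u)\|>R$ since it would place a point of norm $R>M$ in $K(g)$. Note, however, that the paper does not prove Theorem~\ref{th1} at all; it merely quotes it from Gasinski \& Papageorgiou and from Granas \& Dugundji, so there is no in-paper proof to compare against. Your retraction-plus-Schauder argument is the standard proof found in those references, so nothing further needs to be said.
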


In what follows, we denote by $\left\langle \cdot, \cdot\right\rangle$  the duality brackets for the pair $(W^{1,p}(\Omega)^*,W^{1,p}(\Omega))$ and by $||\cdot||$ the norm on $W^{1,p}(\Omega)$. Hence
$$||u||=\left(||u||^p_p+||Du||^p_p\right)^{1/p}\ \mbox{for all}\ u\in W^{1,p}(\Omega).$$

In the analysis of problem (\ref{eq1}), we will make use of the Banach space $C^1(\overline{\Omega})$. This is an ordered Banach space with positive (order) cone
$$C_+=\{u\in C^1(\overline{\Omega}):u(z)\geq 0\ \mbox{for all}\ z\in\overline{\Omega}\}.$$

This cone has a nonempty interior which is given by
$$D_+=\{u\in C_+:u(z)>0\ \mbox{for all}\ z\in\overline{\Omega}\}.$$

In fact, $D_+$ is also the interior of $C_+$ when the latter is furnished with the relative $C(\overline{\Omega})$-norm topology.

Let $A:W^{1,p}(\Omega)\rightarrow W^{1,p}(\Omega)^*$ be the nonlinear operator defined by
$$\left\langle A(u),h\right\rangle=\int_{\Omega}|Du|^{p-2}(Du,Dh)_{\RR^N}dz\ \mbox{for all}\ u,h\in W^{1,p}(\Omega).$$

The next proposition summarizes the main properties of this operator (see Motreanu, Motreanu \& Papageorgiou \cite[p. 40]{16}).
\begin{proposition}\label{prop2}
	The operator $A:W^{1,p}(\Omega)\rightarrow W^{1,p}(\Omega)^*$ is bounded (that is, $A$ maps bounded sets to bounded sets), continuous, monotone (hence also maximal monotone) and of type $(S)_+$, that is,
	$$u_n\stackrel{w}{\rightarrow}u\ \mbox{in}\ W^{1,p}(\Omega)\ \mbox{and}\ \limsup\limits_{n\rightarrow\infty}\left\langle A(u_n),u_n-u\right\rangle\leq 0\Rightarrow u_n\rightarrow u\ \mbox{in}\ W^{1,p}(\Omega).$$
\end{proposition}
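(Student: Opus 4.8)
The plan is to establish the four asserted properties one at a time, each by a by-now standard argument for the $p$-Laplacian. For \textbf{boundedness}, given $u,h\in W^{1,p}(\Omega)$, Hölder's inequality gives
$$|\left\langle A(u),h\right\rangle|\le\int_\Omega|Du|^{p-1}|Dh|\,dz\le||Du||_p^{p-1}||Dh||_p\le||u||^{p-1}||h||,$$
so that $||A(u)||_*\le||u||^{p-1}$ and $A$ maps bounded sets to bounded sets. For \textbf{continuity}, I would take $u_n\to u$ in $W^{1,p}(\Omega)$, so $Du_n\to Du$ in $L^p(\Omega,\RR^N)$; along a subsequence $Du_n(z)\to Du(z)$ for a.a. $z\in\Omega$ with $|Du_n|\le k$ for some $k\in L^p(\Omega)$, whence $|Du_n|^{p-2}Du_n\to|Du|^{p-2}Du$ a.e. and this sequence is dominated in $L^{p'}(\Omega,\RR^N)$ by $k^{p-1}$. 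The dominated convergence theorem then yields $|Du_n|^{p-2}Du_n\to|Du|^{p-2}Du$ in $L^{p'}(\Omega,\RR^N)$, hence $A(u_n)\to A(u)$ in $W^{1,p}(\Omega)^*$; since every subsequence of $\{u_n\}$ admits a further subsequence along which this holds with the same limit $A(u)$, the whole sequence converges.

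For \textbf{monotonicity}, the vector field $\RR^N\ni\xi\mapsto|\xi|^{p-2}\xi$ is the gradient of the convex function $\xi\mapsto\frac{1}{p}|\xi|^p$, hence monotone on $\RR^N$; integrating the pointwise inequality $(|Du|^{p-2}Du-|Dv|^{p-2}Dv,Du-Dv)_{\RR^N}\ge0$ over $\Omega$ gives $\left\langle A(u)-A(v),u-v\right\rangle\ge0$ for all $u,v\in W^{1,p}(\Omega)$. Since $A$ is in addition everywhere defined on the reflexive Banach space $W^{1,p}(\Omega)$ and continuous (hence demicontinuous) by the previous step, a classical result on monotone operators (Browder's theorem) shows that $A$ is \textbf{maximal monotone}.

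For the \textbf{$(S)_+$ property}, suppose $u_n\stackrel{w}{\rightarrow}u$ in $W^{1,p}(\Omega)$ and $\limsup_{n\to\infty}\left\langle A(u_n),u_n-u\right\rangle\le0$. By monotonicity, $\left\langle A(u_n),u_n-u\right\rangle\ge\left\langle A(u),u_n-u\right\rangle\to0$, so $\left\langle A(u_n),u_n-u\right\rangle\to0$ and consequently
$$\lim_{n\to\infty}\int_\Omega\left(|Du_n|^{p-2}Du_n-|Du|^{p-2}Du,Du_n-Du\right)_{\RR^N}dz=\lim_{n\to\infty}\left\langle A(u_n)-A(u),u_n-u\right\rangle=0.$$
The integrand is nonnegative, and I would now invoke the elementary inequalities bounding it below by $c_p|Du_n-Du|^p$ when $p\ge2$, respectively by $c_p|Du_n-Du|^2(|Du_n|+|Du|)^{p-2}$ when $1<p<2$; in the first case this immediately forces $||Du_n-Du||_p\to0$, and in the second a further application of Hölder's inequality (exponents $2/p$ and $2/(2-p)$), together with the boundedness of $\{Du_n\}$ in $L^p(\Omega,\RR^N)$, again yields $||Du_n-Du||_p\to0$. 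Finally, the compact embedding $W^{1,p}(\Omega)\hookrightarrow L^p(\Omega)$ turns $u_n\stackrel{w}{\rightarrow}u$ into $u_n\to u$ in $L^p(\Omega)$, and combining this with $Du_n\to Du$ in $L^p(\Omega,\RR^N)$ gives $u_n\to u$ in $W^{1,p}(\Omega)$, as required. No step here is deep; the one genuinely technical point is the $(S)_+$ verification in the singular range $1<p<2$, where the quadratic-type lower bound degenerates as $|Du_n|+|Du|\to\infty$ and one must interpolate with Hölder's inequality to convert an $L^1$ limit back into an $L^p$ estimate. Everything else reduces to Hölder's inequality, the dominated convergence theorem, and the strict convexity of $\xi\mapsto|\xi|^p$.
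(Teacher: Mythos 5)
The paper does not prove this proposition; it simply cites it from Motreanu, Motreanu \& Papageorgiou \cite[p.~40]{16}. Your argument is correct and is precisely the standard proof one finds in such references: boundedness from H\"older, continuity via the subsequence principle and dominated convergence in $L^{p'}$, monotonicity from the convexity of $\xi\mapsto\frac{1}{p}|\xi|^p$ together with Browder's theorem for maximal monotonicity, and the $(S)_+$ property from the pointwise algebraic inequalities for $|\xi|^{p-2}\xi$ (with the $2/p$, $2/(2-p)$ H\"older interpolation in the singular range $1<p<2$), finished off by Rellich--Kondrachov. There is no discrepancy with the paper because the paper offers no proof to discrepate with; your write-up would serve as a faithful expansion of the cited result. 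One negligible stylistic point: in the monotonicity step you only need convexity, not strict convexity, of $\xi\mapsto|\xi|^p$, so the closing reference to strict convexity is slightly overstated (strict convexity would give strict monotonicity, which you do not use).
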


For the potential function $\xi(\cdot)$, we assume the following:

\smallskip
$H(\xi):$ $\xi\in L^{\infty}(\Omega),\ \xi(z)\geq 0$ for almost all $z\in\Omega$, $\xi\not\equiv 0$.

\smallskip
The following lemma will be helpful in producing estimates in our proofs.
\begin{lemma}\label{lem3}
	If hypothesis $H(\xi)$ holds, then there exists $c_1>0$ such that
	$$\vartheta(u)=||Du||^p_p+\int_{\Omega}\xi(z)|u|^pdz\geq c_1||u||^p\ \mbox{for all}\ u\in W^{1,p}(\Omega).$$
\end{lemma}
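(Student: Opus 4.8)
The plan is to argue by contradiction, exploiting the compactness of the Sobolev embedding $W^{1,p}(\Omega)\hookrightarrow L^p(\Omega)$.

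Suppose the asserted inequality fails. Then for every $n\in\NN$ there is $u_n\in W^{1,p}(\Omega)$ with $\vartheta(u_n)<\frac{1}{n}||u_n||^p$; after normalizing I may assume $||u_n||=1$, so that $\vartheta(u_n)\to 0$. Since $\xi\geq 0$ almost everywhere, both summands in $\vartheta(u_n)=||Du_n||^p_p+\int_{\Omega}\xi(z)|u_n|^pdz$ are nonnegative, hence $||Du_n||_p\to 0$ and $\int_{\Omega}\xi(z)|u_n|^pdz\to 0$.

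Because $\{u_n\}_{n\in\NN}$ is bounded in $W^{1,p}(\Omega)$, I pass to a subsequence (not relabeled) with $u_n\stackrel{w}{\rightarrow}u$ in $W^{1,p}(\Omega)$ and, by the compactness of the embedding into $L^p(\Omega)$, $u_n\to u$ in $L^p(\Omega)$. From $Du_n\stackrel{w}{\rightarrow}Du$ and $Du_n\to 0$ in $L^p(\Omega,\RR^N)$ I get $Du=0$, so $u$ is constant on the connected set $\Omega$, say $u\equiv c$. Combining $u_n\to u$ in $L^p(\Omega)$ with $Du_n\to 0=Du$ in $L^p(\Omega,\RR^N)$, I conclude $u_n\to u$ in $W^{1,p}(\Omega)$, whence $||u||=\lim_{n\to\infty}||u_n||=1$, and in particular $c\neq 0$.

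Finally, since $\xi\in L^{\infty}(\Omega)$ and $u_n\to u$ in $L^p(\Omega)$, I can pass to the limit in $\int_{\Omega}\xi(z)|u_n|^pdz$ to obtain $\int_{\Omega}\xi(z)|u|^pdz=|c|^p\int_{\Omega}\xi(z)dz$. By hypothesis $H(\xi)$ the function $\xi$ is nonnegative and not identically zero, so $\int_{\Omega}\xi(z)dz>0$, forcing $\int_{\Omega}\xi(z)|u|^pdz>0$; this contradicts $\int_{\Omega}\xi(z)|u_n|^pdz\to 0$. Therefore a constant $c_1>0$ with the stated property exists. The only point that needs a little care is the passage from weak to strong convergence in $W^{1,p}(\Omega)$ — which here is immediate because the gradients converge strongly to $0$ — together with the use of connectedness of $\Omega$ to identify the weak limit as a nonzero constant; there is no genuine obstacle.
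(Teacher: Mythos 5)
Your proof is correct and follows essentially the same route as the paper: argue by contradiction, normalize to $\|u_n\|=1$, use the compact embedding to extract a limit, identify that limit as a nonzero constant, and contradict $\int_\Omega\xi\,dz>0$. The only cosmetic difference is that the paper invokes sequential weak lower semicontinuity of $\vartheta$ to conclude $\vartheta(u)=0$, whereas you track the two nonnegative summands separately; the substance is identical.
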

\begin{proof}
	Evidently, $\vartheta\geq 0$. Suppose that the lemma is not true. Exploiting the $p$-homogeneity of $\vartheta(\cdot)$ we can find $\{u_n\}_{n\geq 1}\subseteq W^{1,p}(\Omega)$ such that
	\begin{equation}\label{eq2}
		||u_n||=1\ \mbox{and}\ \vartheta(u_n)\leq\frac{1}{n}\ \mbox{for all}\ n\in{\mathbb N}.
	\end{equation}
	
	We may assume that
	\begin{equation}\label{eq3}
		u_n\stackrel{w}{\rightarrow}u\ \mbox{in}\ W^{1,p}(\Omega)\ \mbox{and}\ u_n\rightarrow u\ \mbox{in}\ L^p(\Omega)\ \mbox{as}\ n\rightarrow\infty.
	\end{equation}
	
	Clearly, $\vartheta(\cdot)$ is sequentially weakly lower semicontinuous. So, it follows from (\ref{eq2}) and (\ref{eq3}) that
		\begin{eqnarray}\label{eq4}
			&&\vartheta(u)=0,\\
			&\Rightarrow&u\equiv\eta\in\RR.\nonumber
		\end{eqnarray}
		
		If $\eta=0$, then $u_n\rightarrow 0$ in $W^{1,p}(\Omega)$, which contradicts (\ref{eq2}). So $\eta\neq 0$. Then
		$$0=|\eta|^p\int_{\Omega}\xi(z)dz>0\ (\mbox{see \cite{4} and hypothesis}\ H(\xi)),$$
		which is a contradiction. The proof 
of Lemma~\ref{lem3}
is now complete.
\end{proof}

Let $x\in\RR$ and $x^{\pm}=\max\{\pm x,0\}$. Then for all $u\in W^{1,p}(\Omega)$, we set $u^{\pm}(\cdot)=u(\cdot)^{\pm}$. We have
$$u^{\pm}\in W^{1,p}(\Omega),\ u=u^+-u^-,\ |u|=u^++u^-.$$

We denote by $|\cdot|_N$  the Lebesgue measure on $\RR^N$. Given $u,v\in W^{1,p}(\Omega)$ with $u\leq v$,  define
$$[u,v]=\{y\in W^{1,p}(\Omega):u(z)\leq y(z)\leq v(z)\ \mbox{for almost all}\ z\in\Omega\}.$$

Also, we denote by ${\rm int}_{C^1(\overline{\Omega})}[u,v]$ the interior  of $[u,v]\cap C^1(\overline{\Omega})$ in the $C^1(\overline{\Omega})$-norm topology.
Finally, if $1<p<\infty$, we denote by $p'>1$  the conjugate exponent of $p>1$, that is, $\frac{1}{p}+\frac{1}{p'}=1$.

\smallskip
Now we can introduce our hypotheses on $f(z,x,y)$:

\smallskip
$H(f):$ $f:\Omega\times\RR\times\RR^N\rightarrow\RR$ is a
Carath\'eodory function such that $f(z,0,y)=0$ for almost all $z\in\Omega$ and all $y\in\RR^N$, and
the following properties hold:
\begin{itemize}
	\item[(i)] there exists a function $w\in W^{1,p}(\Omega)\cap C(\overline{\Omega})$ such that $\Delta_p w\in L^{p'}(\Omega)$ and
	\begin{eqnarray*}
		&&0<\hat{c}\leq w(z)\ \mbox{for all}\ z\in\overline{\Omega},-\Delta_pw(z)+\xi(z)w(z)^{p-1}\geq 0\ \mbox{for almost all}\ z\in\Omega,\\
		&&w(z)^{-\gamma}+f(z,w(z),y)\leq-c^*<0\ \mbox{for almost all}\ z\in\Omega\ \mbox{and all}\ y\in\RR^N,
	\end{eqnarray*}
	and if $\rho=||w||_{\infty}$, there exists $\hat{a}_{\rho}\in L^{\infty}(\Omega)$ such that
	$$|f(z,x,y)|\leq\hat{a}_{\rho}(z)[1+|y|^{p-1}]$$
	for almost all $z\in\Omega$, all $0\leq x\leq\rho,$ and all $y\in\RR^N$;
	\item[(ii)] there exists $\delta_0>0$ such that $f(z,x,y)\geq\tilde{c}_{\delta}>0$ for almost all $z\in\Omega$ and all $0<\delta\leq x\leq\delta_0$,  $y\in\RR^N$;
	\item[(iii)] there exists $\hat{\xi}_{\rho}>0$ such that for almost all $z\in\Omega$ and all $y\in\RR^N$ the mapping
	$$x\mapsto f(z,x,y)+\hat{\xi}_{\rho}x^{p-1}$$
	is nondecreasing on $[0,\rho]$, and for almost all $z\in\Omega$, all $0\leq x\leq \rho$, $y\in\RR^N$, and  $t\in(0,1)$, we have
	\begin{equation}\label{eq5}
		f(z,\frac{1}{t}x,y)\leq\frac{1}{t^{p-1}}f(z,x,y).
	\end{equation}
\end{itemize}

\begin{remark}
	Our aim is to produce positive solutions and all the above hypotheses concern the positive semi-axis $\RR_+=\left[0,+\infty\right)$. So, for simplicity, we may assume that
	\begin{equation}\label{eq6}
		f(z,x,y)=0\ \mbox{for almost all}\ z\in\Omega\ \mbox{and all}\ x\leq 0,\  y\in\RR^N.
	\end{equation}
	
	Hypothesis $H(f)(i)$ is satisfied if, for example, there exists $\eta\in(0,+\infty)$ such that $\eta^{-\gamma}+f(z,\eta,y)\leq-c^*<0$ for almost all $z\in\Omega$ and all $y\in\RR^N$. Hypotheses $H(f)(i),(ii)$ together determine the oscillatory behavior of $f(z,\cdot,y)$ near $0^+$. Hypothesis $H(f)(iii)$ is satisfied if we set $f(z,x,y)=0$ for almost all $z\in\Omega$ and all $x\geq w(z)$,  $y\in\RR^N$ and require that the function $x\mapsto\frac{f(z,x,y)}{x^{p-1}}$ is nonincreasing on $\left(0,w(z)\right]$ for almost all $z\in\Omega$ and all $y\in\RR^N$.
\end{remark}
\begin{example}\label{ex}
	The following function satisfies hypotheses $H(f)$. For the sake of simplicity we drop the $z$-dependence and require $\xi(z)\geq c_0^*>0$ for almost all $z\in\Omega$:
	$$f(z,y)=(z^{p-1}-cz^{\tau-1})(1+|y|^{p-1})$$
	for all $0\leq x\leq 1$, $y\in\RR^N$, with $1<p<\tau<\infty$, and $c<2^{\frac{1}{\tau-1}}$.
\end{example}

Finally, we mention that $0<\gamma<1$.
When the differential operator is singular (that is, $1<p<2$), 
 we require that $\gamma\leq (p-1)^2$, which is equivalent to saying that $1+\frac{\gamma}{p-1}\leq p$.

\section{A singular problem}

In this section we deal with the following purely singular Neumann problem:
\begin{equation}\label{eq7}
\left\{\begin{array}{l}
	-\Delta_pu(z)+\xi(z)u(z)^{p-1}=u(z)^{-\gamma}\ \mbox{in}\ \Omega,\\
\displaystyle		\frac{\partial u}{\partial n}=0\ \mbox{on}\ \partial\Omega,\ u>0.
\end{array}\right\}
\end{equation}

Recall that $\vartheta:W^{1,p}(\Omega)\rightarrow\RR$ is the $C^1$-functional defined by
$$\vartheta(u)=||Du||^p_p+\int_{\Omega}\xi(z)|u|^pdz\ \mbox{for all}\ u\in W^{1,p}(\Omega).$$
\begin{proposition}\label{prop4}
	If hypotheses $H(\xi)$ hold, then problem (\ref{eq7}) has a unique positive solution $\bar{u}\in D_+$.
\end{proposition}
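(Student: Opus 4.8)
The plan is to construct $\bar u$ as the monotone limit, as $\epsilon\downarrow 0$, of solutions $u_\epsilon$ of regularized (singularity-free) Neumann problems, and then to deduce uniqueness from a Díaz--Saá type inequality. For $\epsilon\in(0,1]$ I would first solve
\[
-\Delta_pu+\xi(z)u^{p-1}=(u^++\epsilon)^{-\gamma}\ \mbox{in}\ \Omega,\qquad\frac{\partial u}{\partial n}=0\ \mbox{on}\ \partial\Omega.
\]
The primitive $G_\epsilon(x)=\int_0^x(s^++\epsilon)^{-\gamma}\,ds$ is $C^1$ on $\RR$ with $0<G_\epsilon'\le\epsilon^{-\gamma}$, so $\psi_\epsilon(u)=\tfrac1p\vartheta(u)-\int_\Omega G_\epsilon(u)\,dz$ is a $C^1$ functional on $W^{1,p}(\Omega)$ which is coercive by Lemma~\ref{lem3} and sequentially weakly lower semicontinuous (using the compact embedding $W^{1,p}(\Omega)\hookrightarrow L^p(\Omega)$); hence it has a global minimizer $u_\epsilon$. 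Testing $\psi_\epsilon'(u_\epsilon)=0$ with $-u_\epsilon^-$ and invoking Lemma~\ref{lem3} gives $\vartheta(u_\epsilon^-)\le 0$, so $u_\epsilon\ge 0$, while $\psi_\epsilon$ evaluated at a small positive constant is negative, so $u_\epsilon\not\equiv 0$. As the right-hand side is then in $L^\infty(\Omega)$, nonlinear regularity theory yields $u_\epsilon\in C^{1,\alpha}(\overline\Omega)$, and the nonlinear strong maximum principle together with the Hopf boundary point lemma (applied via $\Delta_pu_\epsilon\le\|\xi\|_\infty u_\epsilon^{p-1}$ and the Neumann condition) gives $u_\epsilon\in D_+$.

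The heart of the argument is a uniform-in-$\epsilon$ lower bound. If $0<\epsilon'<\epsilon\le 1$, then $(u_\epsilon+\epsilon)^{-\gamma}\le(u_\epsilon+\epsilon')^{-\gamma}$, so $u_\epsilon$ is a weak subsolution of the $\epsilon'$-problem; since $x\mapsto\xi(z)x^{p-1}-(x+\epsilon')^{-\gamma}$ is nondecreasing on $\RR_+$, the weak comparison principle (using the strict monotonicity of $a\mapsto|a|^{p-2}a$, and $\xi\not\equiv 0$ to exclude a positive constant difference) yields $u_\epsilon\le u_{\epsilon'}$. Fixing $\epsilon_0\in(0,1]$ and setting $c_*=\min_{\overline\Omega}u_{\epsilon_0}>0$ (legitimate since $u_{\epsilon_0}\in D_+$), we obtain $u_\epsilon\ge c_*$ on $\overline\Omega$, hence $(u_\epsilon+\epsilon)^{-\gamma}\le c_*^{-\gamma}$, for all $\epsilon\le\epsilon_0$. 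Testing the $\epsilon$-problem with $u_\epsilon$ and using Lemma~\ref{lem3}, $(x+\epsilon)^{1-\gamma}\le 1+x^{1-\gamma}$ and $1-\gamma<p$, gives $\|u_\epsilon\|\le M$; together with the uniform $L^\infty$-bound on the reaction, the $L^\infty$-estimates and nonlinear regularity then give $\|u_\epsilon\|_{C^{1,\alpha}(\overline\Omega)}\le M_1$ for all $\epsilon\le\epsilon_0$. By the monotonicity of $\epsilon\mapsto u_\epsilon$ and the compact embedding $C^{1,\alpha}(\overline\Omega)\hookrightarrow C^1(\overline\Omega)$, $u_\epsilon\to\bar u$ in $C^1(\overline\Omega)$ with $\bar u\ge c_*$, so $\bar u\in D_+$; passing to the limit in the weak formulation (noting $(u_\epsilon+\epsilon)^{-\gamma}\to\bar u^{-\gamma}$ uniformly, $A(u_\epsilon)\to A(\bar u)$, and $\xi u_\epsilon^{p-1}\to\xi\bar u^{p-1}$) shows that $\bar u\in D_+$ solves (\ref{eq7}).

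For uniqueness, I would show first that every positive solution $v$ of (\ref{eq7}) lies in $D_+$: indeed $v^{-\gamma}\ge(v+\epsilon)^{-\gamma}$ makes $v$ a weak supersolution of each $\epsilon$-problem, so the comparison argument above gives $u_\epsilon\le v$, whence $v\ge\bar u\ge c_*>0$ and $v\in D_+$ by nonlinear regularity. Then, applying the Díaz--Saá inequality to $\bar u,v\in D_+$ and substituting the equations (the terms containing $\xi$ cancel), one is left with
\[
0\le\int_\Omega\big(\bar u^{\,1-p-\gamma}-v^{\,1-p-\gamma}\big)\big(\bar u^{\,p}-v^{\,p}\big)\,dz .
\]
Since $1-p-\gamma<0$, the integrand is pointwise $\le 0$, so it vanishes a.e., forcing $\bar u=v$.

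The step I expect to be the main obstacle is precisely the control of the singular term $(u_\epsilon+\epsilon)^{-\gamma}$ as $\epsilon\downarrow 0$: without the monotonicity $\epsilon\mapsto u_\epsilon$ and the membership $u_{\epsilon_0}\in D_+$ — which together furnish the uniform lower bound $u_\epsilon\ge c_*>0$ and hence a uniform $L^\infty$ bound on the reaction — passing to the limit in $\int_\Omega(u_\epsilon+\epsilon)^{-\gamma}h\,dz$ and identifying the limit problem would be considerably more delicate. A secondary technical ingredient is the nonlinear strong maximum principle and Hopf lemma used to place $u_{\epsilon_0}$ in $D_+$.
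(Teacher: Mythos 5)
Your proof is correct, and it takes a genuinely different route from the paper's. Both arguments regularize the singularity, minimize a coercive functional to obtain $u_\epsilon\in D_+$, and then let $\epsilon\downarrow 0$; the essential divergence is in how the singular term is controlled in the limit. You establish the \emph{a priori} lower bound $u_\epsilon\geq c_*>0$ \emph{before} passing to the limit, by proving $\epsilon\mapsto u_\epsilon$ is nonincreasing via the weak comparison principle (using the strict monotonicity of $s\mapsto\xi(z)s^{p-1}-(s+\epsilon')^{-\gamma}$ together with $\xi\not\equiv 0$ to rule out the constant shift that Neumann problems allow). Once $u_\epsilon\geq c_*$, the regularized reaction is uniformly bounded in $L^\infty$, the $W^{1,p}$, $L^\infty$ and $C^{1,\alpha}$ estimates become routine, and the monotone limit sits in $D_+$ immediately. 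The paper instead first shows boundedness of $\{u_\epsilon\}$ by a normalization/contradiction argument, obtains only weak convergence, and must then work hard to pass to the limit in the singular term (splitting $\Omega$ into $\Omega_n^1,\Omega_n^2$, using the $(S)_+$ property, and testing with $u_n^{p-1}(u_n^p+\epsilon_n)^{-\frac{(p+\gamma-1)p'}{p^2}}$ to get an $L^{p'}$ bound — the step that forces the extra hypothesis $\gamma\leq(p-1)^2$ when $1<p<2$); it then establishes $\bar u\in D_+$ \emph{after} the limit via the bootstrap $\int_\Omega\bar u^{-k(p+\gamma-1)}\,dz\leq\|\xi\|_\infty^k|\Omega|_N$. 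Your route thus sidesteps the restriction on $\gamma$ for $1<p<2$ and is arguably cleaner, at the cost of needing the comparison argument between $u_\epsilon$ and $u_{\epsilon'}$. For uniqueness you invoke the D\'{\i}az--Sa\'a inequality; the paper's argument is slightly more elementary (test the difference of the two weak formulations with $\bar u-\bar u_0$ and use monotonicity of $A$, of $\xi|\cdot|^{p-2}(\cdot)$, and strict decrease of $x\mapsto x^{-\gamma}$), and both work once both solutions are known to lie in $D_+$; your preliminary comparison step $v\geq\bar u\geq c_*$ is a clean way to ensure this for an arbitrary positive solution $v$.
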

\begin{proof}
	Let $\epsilon>0$ and consider the $C^1$-functional $\psi_{\epsilon}:W^{1,p}(\Omega)\rightarrow\RR$ defined by
	$$\psi_{\epsilon}(u)=\frac{1}{p}\vartheta(u)-\frac{1}{1-\gamma}\int_{\Omega}[(u^+)^p+\epsilon]^{\frac{1-\gamma}{p}}dz\ \mbox{for all}\ u\in W^{1,p}(\Omega).$$
	
	Using Lemma \ref{lem3}, we obtain
	\begin{eqnarray*}
		&&\psi_{\epsilon}(u)\geq\frac{c_1}{p}||u||^p-\frac{1}{1-\gamma}\int_{\Omega}(u^+)^{1-\gamma}dz-c_2\ \mbox{for some}\ c_2>0\\
		&\Rightarrow&\psi_{\epsilon}(\cdot)\ \mbox{is coercive}.
	\end{eqnarray*}
	
	Using the Sobolev embedding theorem, we can easily see that the functional $\psi_{\epsilon}(\cdot)$ is sequentially weakly lower semicontinuous. So, by the Weierstrass-Tonelli theorem, we can find $u_{\epsilon}\in W^{1,p}(\Omega)$ such that
	\begin{equation}\label{eq8}
		\psi_{\epsilon}(u_{\epsilon})=\inf\left\{\psi_{\epsilon}(u):u\in W^{1,p}(\Omega)\right\}.
	\end{equation}
	
	Let $s\in(0,1)$. Then
	\begin{eqnarray}\label{eq9} \psi_{\epsilon}(s)&<&\left(\frac{s^p}{p}||\xi||_{\infty}-\frac{s^{1-\gamma}}{1-\gamma}\right)|\Omega|_N\ \mbox{(see hypothesis $H(\xi)$)}\nonumber\\
		&<&\left(\frac{s^p}{p}||\xi||_{\infty}+\frac{1}{1-\gamma}(\epsilon^{\frac{1-
\gamma}{p}}-s^{1-\gamma})\right)|\Omega|_N.
	\end{eqnarray}
	
	If $s>2\epsilon^{1/p}$, then
	\begin{eqnarray}\label{eq10}
		&&\frac{s^p}{p}||\xi||_{\infty}+\frac{1}{1-\gamma}(\epsilon^{\frac{1-\gamma}{p}}-s^{1-\gamma})\nonumber\\
		&<&\frac{s^p}{p}||\xi||_p-\frac{s^{1-\gamma}}{1-\gamma}\left(1-\frac{1}{2^{1-\gamma}}\right)=\tau(s).
	\end{eqnarray}
	
	Recall that $s\in(0,1)$ and note that $0<1-\gamma<1<p$. So, we can find small
	enough  $\hat{s}\in(0,1)$  such that
	\begin{equation}\label{eq11}
		\tau(\hat{s})<0.
	\end{equation}
	
	Then (\ref{eq9}), (\ref{eq10}), (\ref{eq11}) imply that for small 
	enough $\epsilon\in\left(0,\left(\frac{\hat{s}}{2}\right)^p\right)$, we have
	\begin{eqnarray*}		&&\psi_{\epsilon}(\hat{s})<\psi_{\epsilon}(0)=-\frac{1}{1-\gamma}\epsilon^{\frac{1-\gamma}{p}}|\Omega|_N,\\
		&\Rightarrow&\psi_{\epsilon}(u_{\epsilon})<\psi_{\epsilon}(0)\ \mbox{(see (\ref{eq8}))},\\
		&\Rightarrow&u_{\epsilon}\neq 0.
	\end{eqnarray*}
	
	From (\ref{eq8}) we have
	\begin{eqnarray}\label{eq12}
		&&\psi'_{\epsilon}(u_{\epsilon})=0,\nonumber\\
		&\Rightarrow&\left\langle A(u_{\epsilon}),h\right\rangle+\int_{\Omega}\xi(z)|u_{\epsilon}|^{p-2}u_{\epsilon}hdz=
\int_{\Omega}(u^+)^{p-1}[(u^+)^p+\epsilon]^{\frac{1-(\gamma+p)}{p}}hdz
	\end{eqnarray}
	for all $h\in W^{1,p}(\Omega)$.
	
	In (\ref{eq12}) we choose $h=-u^-_{\epsilon}\in W^{1,p}(\Omega)$. We obtain
	\begin{eqnarray*}
		&&\vartheta(u^-_{\epsilon})=0,\\
		&\Rightarrow&c_1||u^-_{\epsilon}||^p\leq 0\ (\mbox{see Lemma \ref{lem3}}),\\
		&\Rightarrow&u_{\epsilon}\geq 0,\ u_{\epsilon}\neq 0.
	\end{eqnarray*}
	
	From (\ref{eq12}), we have
	\begin{equation}\label{eq13}
		\left\{\begin{array}{l}		-\Delta_pu_{\epsilon}(z)+\xi(z)u_{\epsilon}(z)^{p-1}=u_{\epsilon}(z)^{p-1}[u_{\epsilon}(z)^p+\epsilon]^{\frac{1-(\gamma+p)}{p}}\ \mbox{for almost all}\ z\in\Omega,\\
		\displaystyle		\frac{\partial u_{\epsilon}}{\partial n}=0\ \mbox{on}\ \partial\Omega
		\end{array}\right\}
	\end{equation}
	(see Papageorgiou \& R\u{a}dulescu \cite{18}).
	
	By (\ref{eq13}) and Proposition 7 of Papageorgiou \& R\u{a}dulescu \cite{19}, we have
	$$u_{\epsilon}\in L^{\infty}(\Omega).$$
	
	Then, invoking Theorem 2 of Lieberman \cite{15}, we obtain
	$$u_{\epsilon}\in C_+\backslash\{0\}.$$
	
	From (\ref{eq13}) and hypothesis $H(\xi)$, we have
	\begin{eqnarray*}
		&&\Delta_pu_{\epsilon}(z)\leq||\xi||_{\infty}u_{\epsilon}(z)^{p-1}\ \mbox{for almost all}\ z\in\Omega,\\
		&\Rightarrow&u_{\epsilon}\in D_+\ \mbox{by the nonlinear maximum principle}
	\end{eqnarray*}
	(see Gasinski \& Papageorgiou \cite[p. 738]{3}  and Pucci \& Serrin \cite[p. 120]{25}).
	
	So, for small enough $\epsilon>0$, say $\epsilon\in(0,\epsilon_0)$, we obtain
	 a solution $u_{\epsilon}\in D_+$ for problem (\ref{eq13}).
	\begin{claim}
		$\{u_{\epsilon}\}_{\epsilon\in(0,\epsilon_0)}\subseteq W^{1,p}(\Omega)$ is bounded.
	\end{claim}
	
	We argue by contradiction. So, suppose that the claim is not true. Then we can find\break $\{\epsilon_n\}_{n\geq 1}\subseteq(0,\epsilon_0)$ and corresponding solutions $\{u_n=u_{\epsilon_n}\}_{n\geq 1}\subseteq D_+$ of (\ref{eq13}) such that
	\begin{equation}\label{eq14}
		||u_n||\rightarrow\infty\ \mbox{as}\ n\rightarrow\infty.
	\end{equation}
	
	Let $y_n=\frac{u_n}{||u_n||},\ n\in{\mathbb N}$. Then
	\begin{equation}\label{eq15}
		||y_n||=1\ \mbox{and}\ y_n\geq 0\ \mbox{for all}\ n\in{\mathbb N}.
	\end{equation}
	
	From (\ref{eq12}), we obtain
	\begin{eqnarray}\label{eq16}
		&&\left\langle A(y_n),h\right\rangle+\int_{\Omega}\xi(z)y^{p-1}_nhdz=\int_{\Omega}y^{p-1}_n[u^p_n+\epsilon_n]^{\frac{1-(\gamma+p)}{p}}hdz\\
		&&\mbox{for all}\ h\in W^{1,p}(\Omega),\  n\in{\mathbb N}.\nonumber
	\end{eqnarray}
	
	In (\ref{eq16}) we choose $h=y_n\in W^{1,p}(\Omega)$. Then
	\begin{equation}\label{eq17}
		\vartheta(y_n)=\int_{\Omega}\frac{y^p_n}{[u^p_n+\epsilon_n]^{\frac{p+\gamma-1}{p}}}dz\ \mbox{for all}\ n\in{\mathbb N}.
	\end{equation}
	
	From the first part of the proof, we know that these solutions $u_n$ can be generated by applying the direct method of the calculus of variations to the functionals $\psi_{\epsilon_n}(\cdot)$ and we get
	\begin{eqnarray}\label{eq18}
		&&\psi_{\epsilon_n}(u_n)<0\ \mbox{for all}\ n\in{\mathbb N},\nonumber\\
		&\Rightarrow&\vartheta(u_n)-\frac{p}{1-\gamma}\int_{\Omega}[u^p_n+\epsilon_n]^{\frac{1-\gamma}{p}}dz<0\ \mbox{for all}\ n\in{\mathbb N}.
	\end{eqnarray}
	
	It follows from (\ref{eq17}) and (\ref{eq18}) that
	\begin{eqnarray}\label{eq19}
		\int_{\Omega}\frac{y^p_n}{[u^p_n+\epsilon_n]^{\frac{p+\gamma-1}{p}}}dz&<&\frac{p}{1-\gamma}\int_{\Omega}\frac{[u^p_n+\epsilon_n]^{\frac{1-\gamma}{p}}}{||u_n||^p}dz\nonumber\\
		&\leq&\frac{p}{1-\gamma}\int_{\Omega}\frac{u^{1-\gamma}_n+\epsilon_n^{\frac{1-\gamma}{p}}}{||u_n||^p}dz\rightarrow 0\ \mbox{as}\ n\rightarrow\infty\ (\mbox{see (\ref{eq14})}).
	\end{eqnarray}
	
	Then by
	(\ref{eq17}) and Lemma \ref{lem3}, we have
	\begin{eqnarray*}
		&&c_1||y_n||^p\leq\int_{\Omega}\frac{y^p_n}{[u^p_n+\epsilon_n]^{\frac{p+\gamma-1}{p}}}dz,\\
		&\Rightarrow&y_n\rightarrow 0\ \mbox{in}\ W^{1,p}(\Omega)\ \mbox{as}\ n\rightarrow\infty\ (\mbox{see (\ref{eq19})}),
	\end{eqnarray*}
	which contradicts (\ref{eq15}). This proves the claim.
	
	Consider a sequence $\{\epsilon_n\}_{n\geq 1}\subseteq (0,\epsilon_0)$ such that $\epsilon_n\rightarrow 0^+$. As before, let $\{u_n=u_{\epsilon_n}\}_{n\geq 1}\subseteq D_+$ be the corresponding solutions. On account of the claim, we may assume that
	\begin{equation}\label{eq20}
		u_n\stackrel{w}{\rightarrow}\bar{u}\ \mbox{in}\ W^{1,p}(\Omega)\ \mbox{and}\ u_n\rightarrow \bar{u}\ \mbox{in}\ L^p(\Omega)\ \mbox{as}\ n\rightarrow\infty,\ \bar{u}\geq 0.
	\end{equation}
	
	We know that
	\begin{eqnarray}\label{eq21}
		&&\left\langle A(u_n),h\right\rangle+\int_{\Omega}\xi(z)u^{p-1}_nhdz=\int_{\Omega}
\frac{u^{p-1}_n}{[u^p_n+\epsilon_n]^{\frac{p+\gamma-1}{p}}}hdz\\
		&&\mbox{for all}\ h\in W^{1,p}(\Omega),\  n\in{\mathbb N}\nonumber.
	\end{eqnarray}
	
	Choosing $h=u_n\in W^{1,p}(\Omega)$ in (\ref{eq18}), we obtain
	\begin{equation}\label{eq22}
		-\vartheta(u_n)+\int_{\Omega}\frac{u^p_n}{[u^p_n+\epsilon_n]^{\frac{p+\gamma-1}{p}}}dz=0\ \mbox{for all}\ n\in{\mathbb N}.
	\end{equation}
	
	Moreover, from the first part of the proof (see \eqref{eq11}), we have
	\begin{equation}\label{eq23}
		\vartheta(u_n)-\frac{p}{1-\gamma}\int_{\Omega}[u^p_n+\epsilon_n]^{\frac{1-\gamma}{p}}dz\leq -c_2<0\ \mbox{for all}\ n\in{\mathbb N}.
	\end{equation}
	
	We add (\ref{eq22}) and (\ref{eq23}) and obtain
	\begin{eqnarray}\label{eq24}
		0\leq\int_{\Omega}\frac{u^p_n}{[u^p_n+\epsilon_n]^{\frac{p+\gamma-1}{p}}}dz&\leq&-c_2+\frac{p}{1-\gamma}\int_{\Omega}[u^p_n+\epsilon_n]^{\frac{1-\gamma}{p}}dz\nonumber\\
		&\leq&-c_2+\frac{p}{1-\gamma}\int_{\Omega}[u^{1-\gamma}_n+\epsilon^{\frac{1-\gamma}{n}}_n]dz\ \mbox{for all}\ n\in{\mathbb N}.
	\end{eqnarray}
	
	If $\bar{u}=0$ (see (\ref{eq20})), then
	$$\int_{\Omega}[u^{1-\gamma}_n+\epsilon_n^{\frac{1-\gamma}{n}}]dz\rightarrow 0\ \mbox{as}\ n\rightarrow\infty.$$
	
	This together with (\ref{eq24}) leads to a contradiction. Therefore
	$$\bar{u}\neq 0.$$
	
	On account of (\ref{eq20}) and by passing to a further subsequence if necessary, we may assume that
	\begin{equation}\label{eq25}
		\left.\begin{array}{l}
			u_n(z)\rightarrow\bar{u}(z)\ \mbox{for almost all}\ z\in\Omega\ \mbox{as}\ n\rightarrow\infty,\\
			0\leq u_n(z)\leq k(z)\ \mbox{for almost all}\ z\in\Omega\ \mbox{and all}\ n\in{\mathbb N},\ \mbox{with}\ k\in L^p(\Omega).
		\end{array}\right\}
	\end{equation}
	
	We can always assume that
	\begin{equation}\label{eq26}
		\max\{1,\epsilon_0\}\leq k(z)\ \mbox{for almost all}\ z\in\Omega.
	\end{equation}
	
	For every $n\in{\mathbb N}$, we introduce the following measurable subsets of $\Omega$
	$$\Omega^1_n=\{z\in\Omega:(u_n-\bar{u})(z)>0\}\ \mbox{and}\ \Omega^2_n=\{z\in\Omega:(u_n-\bar{u})(z)<0\},\ n\in{\mathbb N}.$$
	
	Then we have
	\begin{eqnarray}\label{eq27}
		&&\int_{\Omega}\frac{u^{p-1}_n}{[u^p_n+\epsilon_n]^{\frac{p+\gamma-1}{p}}}(u_n-\bar{u})dz\nonumber\\
		&=&\int_{\Omega^1_n}\frac{u^{p-1}_n}{[u^p_n+\epsilon_n]^{\frac{p+\gamma-1}{p}}}(u_n-\bar{u})dz+\int_{\Omega^2_n}\frac{u^{p-1}_n}{[u^p_n+\epsilon_n]^{\frac{p+\gamma-1}{p}}}(u_n-\bar{u})dz\nonumber\\
		&\leq&\int_{\Omega^1_n}\frac{u_n-\bar{u}}{u^{\gamma}_n}dz+\int_{\Omega^2_n}\frac{1}{2k^{\gamma}}\left(\frac{u_n}{k}\right)^{p-1}(u_n-\bar{u})dz\ \mbox{for all}\ n\in{\mathbb N}\ (\mbox{see (\ref{eq25}), (\ref{eq26})}).
	\end{eqnarray}
	
	From (\ref{eq25}) we know that
	\begin{eqnarray}
		&&0\leq\bar{u}(z)\leq k(z)\ \mbox{for almost all}\ z\in\Omega,\label{eq28}\\
		&&-u_n(z)^{-\gamma}\leq-k(z)^{-\gamma}\ \mbox{for almost all}\ z\in\Omega\ \mbox{and all}\ n\in{\mathbb N}.\label{eq29}
	\end{eqnarray}
	
	It follows from (\ref{eq28}), (\ref{eq29}) 
	 that
	\begin{equation}\label{eq30}
		-\bar{u}(z)u_n(z)^{-\gamma}\leq-k(z)^{1-\gamma}\ \mbox{for almost all}\ z\in\Omega\ \mbox{and all}\ n\in{\mathbb N}.
	\end{equation}
	
	Then for all $n\in{\mathbb N}$ we have
	\begin{eqnarray}\label{eq31}
		&&\int_{\Omega^1_n}\frac{u_n-\bar{u}}{u^{\gamma}_n}dz=
\int_{\Omega^1_n}[u^{1-\gamma}_n-\bar{u}u^{-\gamma}_n]dz\nonumber\\
		&&\mbox{for all }n\in{\mathbb N}\ (\mbox{see (\ref{eq25}), (\ref{eq30})}),\nonumber\\
		&\Rightarrow&\limsup\limits_{n\rightarrow\infty}\int_{\Omega^1_n}\frac{u_n-\bar{u}}{u^{\gamma}_n}dz\leq 0.
	\end{eqnarray}
	
	Also, from (\ref{eq25}) and (\ref{eq20}), we can see that
	\begin{equation}\label{eq32}
		\int_{\Omega^2_n}\frac{1}{2k^{\gamma}}\left(\frac{u_n}{k}\right)^{p-1}(u_n-\bar{u})dz\rightarrow 0\ \mbox{as}\ n\rightarrow\infty.
	\end{equation}
	
	We return to (\ref{eq27}), pass to the limit as $n\rightarrow\infty$, and use (\ref{eq31}) and (\ref{eq32}). We obtain
	\begin{equation}\label{eq33}
		\limsup\limits_{n\rightarrow\infty}\int_{\Omega}\frac{u_n^{p-1}}{[u^p_n+\epsilon_n]^{\frac{p+\gamma-1}{p}}}(u_n-\bar{u})dz\leq 0.
	\end{equation}
	
	In (\ref{eq21}) we choose $h=u_n-\bar{u}\in W^{1,p}(\Omega)$. Then
	\begin{eqnarray}\label{eq34}
		&&\left\langle A(u_n),u_n-\bar{u}\right\rangle+\int_{\Omega}\xi(z)u_n^{p-1}(u_n-\bar{u})dz=\int_{\Omega}\frac{u_n^{p-1}}{[u^p_n+\epsilon_n]^{\frac{p+\gamma-1}{p}}}(u_n-\bar{u})dz\nonumber\\
		&&\mbox{for all}\ n\in{\mathbb N},\nonumber\\
		&\Rightarrow&\limsup\limits_{n\rightarrow\infty}\left\langle A(u_n),u_n-\bar{u}\right\rangle\leq 0\ \mbox{(see (\ref{eq20}), (\ref{eq33}))},\nonumber\\
		&\Rightarrow&u_n\rightarrow\bar{u}\ \mbox{in}\ W^{1,p}(\Omega)\ (\mbox{see Proposition \ref{prop2}}),\ \bar{u}\geq 0,\ \bar{u}\neq 0.
	\end{eqnarray}
	
Using in \eqref{eq12} as a test function
$$h=\frac{u_n^{p-1}}{(u_n^p+\epsilon_n)^{\frac{p+\gamma-1}{p}\frac{p'}{p}}}\in W^{1,p}(\Omega)$$
(recall that $u_n\in D_+$) and our hypothesis on $\gamma$, we
can
 infer that
$$\left\{\frac{u^{p-1}_n}{(u^p_n+\epsilon_n)^{\frac{p+\gamma-1}{p}}}\right\}_{n\geq 1}\subseteq L^{p'}(\Omega)\ \mbox{is bounded}.$$
	
	Also, we have
	$$\frac{u^p_n}{(u^{p-1}_n+\epsilon_n)^{\frac{p+\gamma-1}{p}}}\rightarrow\bar{u}^{-\gamma}\ \mbox{for almost all}\ z\in\Omega\ (\mbox{see (\ref{eq25})}).$$
	
	Then Problem 1.19 in Gasinski \& Papageorgiou \cite[p. 46]{5} implies that
	\begin{eqnarray}\label{eq35}
		&&\frac{u^{p-1}_n}{(u^p_n+\epsilon_n)^{\frac{p+\gamma-1}{p}}}\stackrel{w}{\rightarrow}\bar{u}^{-\gamma}\ \mbox{in}\ L^{p'}(\Omega),\nonumber\\
		&\Rightarrow&\int_{\Omega}\frac{u^{p-1}_n}{[u^p_n+\epsilon_n]^{\frac{p+\gamma-1}{p}}}hdz\rightarrow\int_{\Omega}\bar{u}^{-\gamma}hdz\ \mbox{for all}\ h\in W^{1,p}(\Omega).
	\end{eqnarray}
	
	Passing to the limit as $n\rightarrow\infty$ in (\ref{eq21}) and using (\ref{eq34}) and (\ref{eq35}), we obtain
	\begin{equation}\label{eq36}
		\left\langle A(\bar{u}),h\right\rangle+\int_{\Omega}\xi(z)\bar{u}^{p-1}hdz=\int_{\Omega}\bar{u}^{-\gamma}hdz\ \mbox{for all}\ h\in W^{1,p}(\Omega)
	\end{equation}
	
	In (\ref{eq36}) we first choose $h=\frac{1}{[\bar{u}^p+\delta]^{\frac{p-1}{p}}}\in W^{1,p}(\Omega),\delta>0$. Then
	\begin{eqnarray*}
		&&\int_{\Omega}\xi(z)\frac{\bar{u}^{p-1}}{[\bar{u}^p+\delta]^{\frac{p-1}{p}}}dz\geq\int_{\Omega}\frac{\bar{u}^{-\gamma}}{[\bar{u}^p+\delta]^{\frac{p-1}{p}}}dz,\\
		&&\int_{\Omega}\frac{\bar{u}^{-\gamma}}{[\bar{u}^p+\delta]^{\frac{p-1}{p}}}dz\leq||\xi||_{\infty}|\Omega|_N\ (\mbox{see hypothesis}\ H(\xi)).
	\end{eqnarray*}
	
	We let $\delta\rightarrow 0^+$ and use Fatou's lemma. Then
	\begin{equation}\label{eq37}
		\int_{\Omega}\frac{1}{\bar{u}^{p+\gamma-1}}dz\leq||\xi||_{\infty}|\Omega|_N.
	\end{equation}
	
	Next, we choose in (\ref{eq36}) $h=\frac{1}{[\bar{u}^p+\gamma]^{\frac{2(p-1)+\gamma}{p}}}\in W^{1,p}(\Omega)$. Reasoning as above, we obtain 
	 via Fatou's lemma as $\delta\rightarrow 0^+$
	\begin{eqnarray*}
		\int_{\Omega}\frac{\bar{u}^{-\gamma}}{\bar{u}^{2(p-1)+\gamma}}dz=\int_{\Omega}\frac{1}{\bar{u}^{2(p+\gamma-1)}}dz&\leq&\int_{\Omega}\xi(z)\frac{\bar{u}^{p-1}}{\bar{u}^{2(p-1)+\gamma}}dz\\
		&=&\int_{\Omega}\xi(z)\frac{1}{\bar{u}^{p+\gamma-1}}dz\\
		&\leq&||\xi||^2_{\infty}|\Omega|_N\ (\mbox{see (\ref{eq37})}).
	\end{eqnarray*}
	
	Continuing in this way, we obtain
	\begin{equation}\label{eq38}
		\int_{\Omega}\frac{1}{\bar{u}^{k(p+\gamma-1)}}dz\leq||\xi||^k_{\infty}|\Omega|_N\ \mbox{for all}\ k\in{\mathbb N}.
	\end{equation}
	
	Therefore we can infer that
	\begin{eqnarray*}
		&&\bar{u}^{-(p+\gamma-1)}\in L^{\tau}(\Omega)\ \mbox{for all}\ \tau\geq 1,\\
		&&\limsup\limits_{\tau\rightarrow+\infty}||\bar{u}^{-(p+\gamma-1)}||_{\tau}<+\infty.
	\end{eqnarray*}
	
	Then Problem 3.104 in Gasinski \& Papageorgiou \cite[p. 477]{4} implies that
	$$\bar{u}^{-(p+\gamma-1)}\in L^{\infty}(\Omega).$$
	
	Note that
	$$\bar{u}^{-\gamma}=\bar{u}^{-(p+\gamma-1)}\bar{u}^{p-1}.$$
	
	Therefore from (\ref{eq36}) and Proposition 7 of Papageorgiou \& R\u{a}dulescu \cite{19}, we have
	$$\bar{u}\in L^{\infty}(\Omega).$$
	
	Invoking Theorem 2 of Lieberman \cite{15}, we have
	$$\bar{u}\in C_+\backslash\{0\}.$$
	
	It follows by (\ref{eq36})  that
	\begin{eqnarray}\label{eq39}
		&&-\Delta_p\bar{u}(z)+\xi(z)\bar{u}(z)^{p-1}=\bar{u}(z)^{-\gamma}\ \mbox{for almost all}\ z\in\Omega,\ \frac{\partial\bar{u}}{\partial n}=0\ \mbox{on}\ \partial\Omega\\
		&&(\mbox{see Papageorgiou \& R\u{a}dulescu \cite{18}}),\nonumber\\
		&\Rightarrow&\Delta_p\bar{u}(z)\leq||\xi||_{\infty}\bar{u}(z)^{p-1}\ \mbox{for almost all}\ z\in\Omega,\nonumber\\
		&\Rightarrow&\bar{u}\in D_+\ (\mbox{by the nonlinear maximum principle (see (\cite[p. 738]{3} and \cite[p. 120]{25}))}).\nonumber
	\end{eqnarray}
	
	Finally, we
	can  show that the positive solution is unique.	
	Suppose that $\bar{u}_0\in W^{1,p}(\Omega)$ is another positive solution of (\ref{eq7}). Again we have $\bar{u}_0\in D_+$. Also
	\begin{eqnarray*}
		&&0\leq\left\langle A(\bar{u})-A(\bar{u}_0),\bar{u}-u_0\right\rangle+\int_{\Omega}\xi(z)(\bar{u}^{p-1}-\bar{u}_0^{p-1})(\bar{u}-\bar{u}_0)dz\\
		&&=\int_{\Omega}(\bar{u}^{-\gamma}-\bar{u}_0^{-\gamma})(\bar{u}-\bar{u}_0)dz\leq 0,\\
		&\Rightarrow&\bar{u}=\bar{u}_0\ (\mbox{the function}\ x\mapsto\frac{1}{x^{\gamma}}\ \mbox{is strictly decreasing on}\ (0,+\infty)).
	\end{eqnarray*}
	
	This proves the uniqueness of the positive solution $\bar{u}\in D_+$ of (\ref{eq7})
	and thus completes the proof of  Proposition~\ref{prop4}.
\end{proof}

\section{Existence of positive solutions}

Let $\bar{u}\in D_+$ be the unique positive solution of (\ref{eq7}) produced by Proposition \ref{prop4}. We choose $t\in(0,1)$ small enough such that
\begin{equation}\label{eq40}
	\tilde{u}=t\bar{u}\leq \min\{\hat{c},\delta_0\}\ \mbox{on}\ \overline{\Omega}\ (\mbox{see hypotheses $H(f)(i),(ii)$}).
\end{equation}

Then given $\tilde{v}\in W^{1,p}(\Omega)$, we have
\begin{eqnarray}\label{eq41}
	-\Delta_p\tilde{u}(z)+\xi(z)\tilde{u}(z)^{p-1}&=&t^{p-1}[-\Delta_p\bar{u}(z)+\xi(z)\bar{u}(z)^{p-1}]\nonumber\\
	&=&t^{p-1}\bar{u}(z)^{-\gamma}\ (\mbox{see (\ref{eq39})})\nonumber\\
	&\leq&\tilde{u}(z)^{-\gamma}+f(z,\tilde{u}(z),Dv(z))\ \mbox{for almost all}\ z\in\Omega
\end{eqnarray}
(see (\ref{eq40}) and hypothesis $H(f)(ii)$).

Given $v\in C^1(\overline{\Omega})$, we consider the following nonlinear auxiliary Neumann problem:
\begin{equation}\label{eq42}
	\left\{\begin{array}{l}
		-\Delta_pu(z)+\xi(z)u(z)^{p-1}=u(z)^{-\gamma}+f(z,u(z),Dv(z))\ \mbox{in}\ \Omega,\\
		\frac{\partial u}{\partial n}=0\ \mbox{on}\ \partial\Omega,\ u>0.
	\end{array}\right\}
\end{equation}
\begin{proposition}\label{prop5}
	If hypotheses $H(\xi),H(f)$ hold, then for every $v\in C^1(\overline{\Omega})$ problem (\ref{eq42}) has a solution $u_v\in[\tilde{u},w]\cap C^1(\overline{\Omega})$, with $w(\,\cdot\,)$ being the function from hypothesis $H(f)(i)$.
\end{proposition}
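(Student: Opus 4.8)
The plan is to combine a double truncation (from below at $\tilde u$, from above at $w$, which simultaneously removes the singularity) with the direct method of the calculus of variations, and then to recover a genuine solution of \eqref{eq42} in the order interval $[\tilde u,w]$ by weak comparison and nonlinear regularity. Fix $v\in C^1(\overline\Omega)$, so $Dv\in L^\infty(\Omega,\RR^N)$. By \eqref{eq40} and $H(f)(i)$, $\tilde u=t\bar u\in D_+$ satisfies $0<t\hat c\le\tilde u\le\hat c\le w$ on $\overline\Omega$, so $[\tilde u,w]\neq\emptyset$. Introduce the Carath\'eodory function
$$g(z,x)=\begin{cases}\tilde u(z)^{-\gamma}+f(z,\tilde u(z),Dv(z)),& x\le\tilde u(z),\\ x^{-\gamma}+f(z,x,Dv(z)),& \tilde u(z)<x<w(z),\\ w(z)^{-\gamma}+f(z,w(z),Dv(z)),& x\ge w(z).\end{cases}$$
Since $\tilde u\ge t\hat c>0$, since $0\le x\le w\le\rho$ on the middle branch (so $|f(z,x,Dv)|\le\hat a_\rho(z)(1+\|Dv\|_\infty^{p-1})$ by $H(f)(i)$), and since $\hat a_\rho\in L^\infty(\Omega)$, there is $M>0$ with $|g(z,x)|\le M$ for a.a.\ $z\in\Omega$ and all $x\in\RR$. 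The regular auxiliary problem to be solved is $-\Delta_pu+\xi(z)u^{p-1}=g(z,u)$ in $\Omega$, $\partial u/\partial n=0$ on $\partial\Omega$.

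To solve it, put $G(z,x)=\int_0^xg(z,s)\,ds$, so $|G(z,x)|\le M|x|$, and let $\varphi(u)=\frac1p\vartheta(u)-\int_\Omega G(z,u)\,dz$ on $W^{1,p}(\Omega)$. By Lemma~\ref{lem3}, $\varphi(u)\ge\frac{c_1}{p}\|u\|^p-M|\Omega|_N^{1/p'}\|u\|$, hence $\varphi$ is coercive, and the compact embedding $W^{1,p}(\Omega)\hookrightarrow L^p(\Omega)$ makes $\varphi$ sequentially weakly lower semicontinuous; so $\varphi$ attains its infimum at some $u_v\in W^{1,p}(\Omega)$, and $\varphi'(u_v)=0$ gives
$$\langle A(u_v),h\rangle+\int_\Omega\xi(z)|u_v|^{p-2}u_vh\,dz=\int_\Omega g(z,u_v)h\,dz\quad\text{for all }h\in W^{1,p}(\Omega).$$

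Next I would establish $\tilde u\le u_v\le w$. By \eqref{eq41}, together with $\Delta_p\tilde u\in L^{p'}(\Omega)$ (since $\bar u$ solves \eqref{eq39} with $\bar u\in D_+$), $\tilde u$ is a subsolution: $\langle A(\tilde u),h\rangle+\int_\Omega\xi\tilde u^{p-1}h\,dz\le\int_\Omega[\tilde u^{-\gamma}+f(z,\tilde u,Dv)]h\,dz$ for all $0\le h\in W^{1,p}(\Omega)$. Testing the difference of this inequality with the equation for $u_v$ against $h=(\tilde u-u_v)^+\ge0$, and using that $g(z,u_v)=\tilde u^{-\gamma}+f(z,\tilde u,Dv)$ on $\{\tilde u>u_v\}$, we get
$$\langle A(\tilde u)-A(u_v),(\tilde u-u_v)^+\rangle+\int_\Omega\xi(z)\bigl(\tilde u^{p-1}-|u_v|^{p-2}u_v\bigr)(\tilde u-u_v)^+\,dz\le0.$$
Both summands are $\ge0$ (monotonicity of $A$ from Proposition~\ref{prop2}, monotonicity of $x\mapsto|x|^{p-2}x$, and $\xi\ge0$), so both vanish; the strict monotonicity of the $p$-Laplacian forces $D(\tilde u-u_v)^+=0$, so $(\tilde u-u_v)^+$ is constant, and feeding this back (using $\xi\not\equiv0$ from $H(\xi)$) shows it is $0$, i.e.\ $\tilde u\le u_v$. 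For the upper bound, $H(f)(i)$ gives $-\Delta_pw+\xi w^{p-1}\ge0>-c^*\ge w^{-\gamma}+f(z,w,Dv)$, and since $\Delta_pw\in L^{p'}(\Omega)$ this makes $w$ a (strict) upper solution of \eqref{eq42}; testing with $h=(u_v-w)^+\ge0$ and using $g(z,u_v)=w^{-\gamma}+f(z,w,Dv)$ on $\{u_v>w\}$, the identical argument yields $u_v\le w$.

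Since $\tilde u\le u_v\le w$, both truncations are inactive, so $g(z,u_v)=u_v^{-\gamma}+f(z,u_v,Dv)$ a.e.\ and $u_v$ is a weak solution of \eqref{eq42} with $u_v\ge\tilde u\ge t\hat c>0$. Moreover $u_v\in[\tilde u,w]\subseteq L^\infty(\Omega)$ and $u_v\ge t\hat c>0$, so its right-hand side $u_v^{-\gamma}+f(z,u_v,Dv)$ lies in $L^\infty(\Omega)$ (by $H(f)(i)$ and $Dv\in L^\infty$); then Proposition~7 of Papageorgiou \& R\u{a}dulescu \cite{19} and Theorem~2 of Lieberman \cite{15} give $u_v\in C^{1,\alpha}(\overline\Omega)\subseteq C^1(\overline\Omega)$, so $u_v\in[\tilde u,w]\cap C^1(\overline\Omega)$, as claimed. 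I expect the delicate point to be the comparison step, in particular verifying that the pointwise data in $H(f)(i)$ together with $\Delta_pw\in L^{p'}(\Omega)$ genuinely make $w$ an admissible upper solution in the weak Neumann sense, and carrying out the $p$-Laplacian comparison cleanly in the singular range $1<p<2$; the variational existence for the truncated problem and the regularity bootstrap are routine.
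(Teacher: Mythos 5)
Your proof is correct and follows essentially the same route as the paper: the same double truncation at $\tilde u$ and $w$ (the paper's $\hat f_v$), the direct method applied to the same energy functional, the same test functions $(\tilde u-u_v)^+$ and $(u_v-w)^+$ in the weak comparison (with you supplying the small extra detail that strict monotonicity plus $\xi\not\equiv0$ rules out a nonzero constant for $(\tilde u-u_v)^+$, which the Neumann setting requires), and the same $L^\infty$/Lieberman regularity bootstrap.
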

\begin{proof}
	We introduce the following truncation of the reaction
	term  in problem (\ref{eq1}):
	\begin{eqnarray}\label{eq43}
		\hat{f}_v(z,x)=\left\{\begin{array}{l}
			\tilde{u}(z)^{-\gamma}+f(z,\tilde{u}(z),Dv(z))\ \mbox{if}\ x<\tilde{u}(z)\\
			x^{-\gamma}+f(z,x,Dv(z))\ \mbox{if}\ \tilde{u}(z)\leq x\leq w(z)\\
			w(z)^{-\gamma}+f(z,w(z),Dv(z))\ \mbox{if}\ w(z)<x.
		\end{array}\right.
	\end{eqnarray}
	
	Evidently, $\hat{f}_v(\cdot,\cdot)$ is a Carath\'eodory function. We set $\hat{F}_v(z,x)=\int^x_0\hat{f}_v(z,s)ds$ and consider the $C^1$-functional $\hat{\varphi}_v:W^{1,p}(\Omega)\rightarrow\RR$ defined by
	$$\hat{\varphi}_v(u)=\frac{1}{p}\vartheta(u)-\int_{\Omega}\hat{F}_v(z,u(z))dz\ \mbox{for all}\ u\in W^{1,p}(\Omega).$$
	
	It is clear from (\ref{eq43}) that $\hat{\varphi}_v(\cdot)$ is coercive. Also, it is sequentially weakly lower semicontinuous. So, by the Weierstrass-Tonelli theorem, we can find $u_v\in W^{1,p}(\Omega)$ such that
	\begin{eqnarray}\label{eq44}
		&&\hat{\varphi}_v(u_v)=\inf\{\hat{\varphi}_v(u):u\in W^{1,p}(\Omega)\},\nonumber\\
		&\Rightarrow&\hat{\varphi}'_v(u_v)=0,\nonumber\\
		&\Rightarrow&\left\langle A(u_v),h\right\rangle+\int_{\Omega}\xi(z)|u_v|^{p-2}u_vhdz=\int_{\Omega}\hat{f}_v(z,u_v)hdz\ \mbox{for all}\ h\in W^{1,p}(\Omega).
	\end{eqnarray}
	
	In (\ref{eq44})  we first choose $h=(\tilde{u}-u_v)^+\in W^{1,p}(\Omega)$. We have
	\begin{eqnarray}\label{eq45}
		&&\left\langle A(u_v),(\tilde{u}-u_v)^+\right\rangle+\int_{\Omega}\xi(z)|u_v|^{p-2}u_v(\tilde{u}-u_v)^+dz\nonumber\\
		&&=\int_{\Omega}[\tilde{u}^{-\gamma}+f(z,\tilde{u},Dv)](\tilde{u}-u_v)^+dz\ (\mbox{see (\ref{eq43})})\nonumber\\
		&&\geq\left\langle A(\tilde{u}),(\tilde{u}-u_v)^+\right\rangle+\int_{\Omega}\xi(z)\tilde{u}^{p-1}(\tilde{u}-u_v)^+dz\ (\mbox{see (\ref{eq41})}),\nonumber\\
		&\Rightarrow&0\geq\left\langle A(\tilde{u})-A(u_v),(\tilde{u}-u_v)^+\right\rangle+\int_{\Omega}\xi(z)(\tilde{u}^{p-1}-|u_v|^{p-2}u_v)(\tilde{u}-u_v)^+dz,\nonumber\\
		&\Rightarrow&\tilde{u}\leq u_v.
	\end{eqnarray}
	
	Next, we choose in (\ref{eq44}) $h=(u_v-w)^+\in W^{1,p}(\Omega)$. Then
	\begin{eqnarray}\label{eq46}
		&&\left\langle A(u_v),(u_v-w)^+\right\rangle+\int_{\Omega}\xi(z)u_v^{p-1}(u_v-w)^+dz\ (\mbox{see (\ref{eq45})})\nonumber\\
		&&=\int_{\Omega}[w^{-\gamma}+f(z,w,Dv)](u_v-w)^+dz\ (\mbox{see (\ref{eq43})})\nonumber\\
		&&\leq\left\langle A(w),(u_v-w)^+\right\rangle+\int_{\Omega}\xi(z)w^{p-1}(u_v-w)^+dz\ (\mbox{see hypothesis}\ H(f)(i)),\nonumber\\
		&\Rightarrow&\left\langle A(u_v)-A(w),(u_v-w)^+\right\rangle+\int_{\Omega}\xi(z)(u_v^{p-1}-w^{p-1})(u_v-w)^+dz\leq 0,\nonumber\\
		&\Rightarrow&u_v\leq w.
	\end{eqnarray}
	
	It follows from (\ref{eq45}) and (\ref{eq46}) that
	\begin{equation}\label{eq47}
		u_v\in[\tilde{u},w].
	\end{equation}
	
	On account of (\ref{eq47}), (\ref{eq43}) and (\ref{eq44}), we have
	\begin{eqnarray}\label{eq48}
		&&-\Delta_pu_v(z)+\xi(z)u_v(z)^{p-1}=u_v(z)^{-\gamma}+f(z,u_v(z),Dv(z))\ \mbox{for almost all}\ z\in\Omega,\nonumber\\
		&&\frac{\partial u_v}{\partial n}=0\ \mbox{on}\ \partial\Omega\\
		&&(\mbox{see Papageorgiou \& R\u{a}dulescu \cite{18}}).\nonumber
	\end{eqnarray}
	
	From (\ref{eq48}) and  Papageorgiou \& R\u{a}dulescu \cite[Proposition 7]{19}, we have
	$$u_v\in L^{\infty}(\Omega).$$
	
	Then Theorem 2 of Lieberman \cite{15} implies that $u_v\in D_+$. Therefore
	$$u_v\in[\tilde{u},w]\cap C^1(\overline{\Omega}).$$
The proof 
of Proposition~\ref{prop5}
is now complete.
\end{proof}

We introduce the solution set
$$S_v=\{u\in W^{1,p}(\Omega):u\ \mbox{is a solution of (\ref{eq42})},\ u\in[\tilde{u},w]\}.$$

By Proposition \ref{prop5}, we have
$$\emptyset\neq S_v\subseteq[\tilde{u},v]\cap C^1(\overline{\Omega}).$$

In fact, we have the following stronger result for the elements of $S_v$.
\begin{proposition}\label{prop6}
	If hypotheses $H(\xi),\, H(f)$ hold and $u\in S_v$, then $u\in {\rm int}_{C^1(\overline{\Omega})}[\tilde{u},w]$.
\end{proposition}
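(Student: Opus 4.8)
The goal is to upgrade the membership $u\in[\tilde u,w]\cap C^1(\overline\Omega)$ established in Proposition~\ref{prop5} to membership in the $C^1$-interior of the order interval. The standard route is to produce \emph{strict} inequalities at the endpoints: I would show $\tilde u(z)<u(z)$ and $u(z)<w(z)$ for all $z\in\overline\Omega$ by means of the strong comparison principle for the $p$-Laplacian, after arranging the correct differential inequalities with a suitable shift term.

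First I would fix $u\in S_v$, so that $u\in D_+$ solves \eqref{eq42} and $\tilde u\le u\le w$. For the lower bound: using $\tilde u=t\bar u$ and \eqref{eq41} together with hypothesis $H(f)(ii)$ (which applies since $\tilde u\le\delta_0$), I get
$$-\Delta_p\tilde u+\xi(z)\tilde u^{p-1}\le \tilde u^{-\gamma}+f(z,\tilde u,Dv)\le \tilde u^{-\gamma}+\tilde c_{\delta}\ \ \text{a.e. in }\Omega,$$
whereas $u$ satisfies the corresponding equation with $u^{-\gamma}+f(z,u,Dv)$ on the right. I would add $\hat\xi_\rho x^{p-1}$ to both sides and use the monotonicity of $x\mapsto f(z,x,Dv(z))+\hat\xi_\rho x^{p-1}$ on $[0,\rho]$ from $H(f)(iii)$, plus the fact that $x\mapsto x^{-\gamma}$ is decreasing, to compare $u$ with $\tilde u$. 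The key point is to make the inequality \emph{strict}: since $\tilde u^{-\gamma}+f(z,\tilde u,Dv)\le \tilde u^{-\gamma}+\tilde c_\delta$ strictly improves on what is needed (because of the positive gap $c^*$ at $w$ or the positivity $\tilde c_\delta>0$ of $f$ near $\tilde u$), I can invoke the strong maximum principle / strong comparison result of Gasinski--Papageorgiou or Pucci--Serrin (already cited in the paper on p.~738 of \cite{3} and p.~120 of \cite{25}) to conclude $u-\tilde u\in D_+$, i.e.\ $\tilde u(z)<u(z)$ on $\overline\Omega$ and, if needed on the boundary, $\frac{\partial(u-\tilde u)}{\partial n}<0$ where $u=\tilde u$ — but since this never happens in the interior, the interior statement suffices.

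Next, for the upper bound I would compare $u$ with $w$. By $H(f)(i)$, $w^{-\gamma}+f(z,w,y)\le -c^*<0$ for all $y$, so
$$-\Delta_p w+\xi(z)w^{p-1}\ge 0 > w^{-\gamma}+f(z,w,Dv)+c^*,$$
while $u$ solves $-\Delta_p u+\xi(z)u^{p-1}=u^{-\gamma}+f(z,u,Dv)$. Adding $\hat\xi_\rho x^{p-1}$ and using the nondecreasing monotonicity from $H(f)(iii)$ on $[0,\rho]$ (valid since $u,w\in[0,\rho]$), combined with the strict gap $c^*>0$, I again get a strict differential inequality between $w$ and $u$, so the strong comparison principle yields $w-u\in D_+$, that is $u(z)<w(z)$ on $\overline\Omega$.

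Having both strict inequalities $\tilde u(z)<u(z)<w(z)$ on the compact set $\overline\Omega$, together with $u\in C^1(\overline\Omega)$, it is a standard fact (because $D_+$ is the $C^1(\overline\Omega)$-interior of $C_+$) that $u$ lies in the $C^1(\overline\Omega)$-interior of $[\tilde u,w]$: a small $C^1$-ball around $u$ stays trapped between $\tilde u$ and $w$. I expect the main obstacle to be the careful bookkeeping needed to turn the pointwise a.e.\ inequalities into the form required by the strong comparison theorem — in particular verifying that after adding the shift $\hat\xi_\rho x^{p-1}$ the relevant right-hand side is monotone and that the strict gap ($c^*$ at the top, $\tilde c_\delta$ at the bottom) genuinely produces a strict inequality that the $p$-Laplacian strong maximum principle can exploit, rather than merely a non-strict one; handling the singular term $x^{-\gamma}$ near $\tilde u$ (where $\tilde u\ge \hat c>0$, so it is bounded) is then routine.
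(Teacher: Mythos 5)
Your proposal correctly identifies the two strict gaps that drive the result — $f(z,\tilde u,Dv)\geq\tilde c_{\tilde\rho}>0$ at the lower end (from $H(f)(ii)$) and $w^{-\gamma}+f(z,w,Dv)\leq-c^*<0$ at the upper end (from $H(f)(i)$) — and it also correctly sets up the shifted monotone nonlinearity $x\mapsto x^{-\gamma}+f(z,x,Dv(z))+\hat\xi_\rho x^{p-1}$. Where you diverge from the paper, and where a genuine gap appears, is in the mechanism you propose for converting these gaps into strict pointwise separation. You invoke a ``strong comparison principle'' for the $p$-Laplacian, citing \cite[p.~738]{3} and \cite[p.~120]{25}. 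But those references are Vazquez-type \emph{strong maximum principles}, not comparison principles: they apply to a single nonnegative function $h\geq 0$ satisfying $\Delta_p h\leq c(z)h^{p-1}$. You cannot apply them directly to $h=u-\tilde u$, because $\Delta_p$ is not additive for $p\neq 2$, so a differential inequality for $u-\tilde u$ does not follow from the two equations for $u$ and $\tilde u$. Genuine strong comparison theorems for the $p$-Laplacian do exist, but they require extra structural conditions (e.g.\ on the set where the gradients coincide) and are considerably more delicate than you indicate; they are not an off-the-shelf consequence of the strict gap in the right-hand sides.

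The paper sidesteps this entirely with a $\delta$-shift argument that uses only \emph{weak} comparison (strict monotonicity of $v\mapsto -\Delta_p v+(\xi+\hat\xi_\rho)|v|^{p-2}v$). For the lower bound one sets $\tilde u^\delta=\tilde u+\delta$ and computes
\[
-\Delta_p\tilde u^\delta+(\xi+\hat\xi_\rho)(\tilde u^\delta)^{p-1}\leq -\Delta_p\tilde u+(\xi+\hat\xi_\rho)\tilde u^{p-1}+\lambda(\delta),\qquad \lambda(\delta)\to 0^+,
\]
then uses the strict gap $f(z,\tilde u,Dv)\geq\tilde c_{\tilde\rho}>0$ to absorb $\lambda(\delta)$ for $\delta$ small, and finally applies $H(f)(iii)$-monotonicity plus weak comparison to get $\tilde u+\delta\leq u$, hence $u-\tilde u\geq\delta>0$ on $\overline\Omega$, i.e.\ $u-\tilde u\in D_+$. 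The upper bound is treated symmetrically, with $u^\delta=u+\delta$ and the gap $-c^*$ absorbing the error. This route is elementary, self-contained, and avoids the subtle regularity issues you flagged at the end of your proposal; it is exactly the ``careful bookkeeping'' that you anticipated but did not carry out, and it replaces the strong comparison step that your argument leaves unjustified.
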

\begin{proof}
	Let $\tilde{\rho}=\min\limits_{\overline{\Omega}}\tilde{u}>0$ (recall that $\tilde{u}\in D_+$). So, we can increase $\hat{\xi}_{\rho}>0$ postulated by hypothesis $H(f)(iii)$ in order to guarantee that for almost all $z\in\Omega$, the function
	$$x\mapsto x^{-\gamma}+f(z,x,Dv(z))+\hat{\xi}_px^{p-1}$$
	is nondecreasing on $[\tilde{\rho},\rho]\subseteq\RR_+$.
	
	Let $\delta>0$ and set $\tilde{u}^{\delta}=\tilde{u}+\delta\in D_+$. Then
	\begin{eqnarray*}
		&&-\Delta_p\tilde{u}^{\delta}+(\xi(z)+\hat{\xi}_{\rho})(\tilde{u}^{\delta})^{p-1}\\
		&&\leq-\Delta_p\tilde{u}+(\xi(z)+\hat{\xi}_{\rho})\tilde{u}^{p-1}+\lambda(\delta)\ \mbox{with}\ \lambda(\delta)\rightarrow 0^+\ \mbox{as}\ \delta\rightarrow 0^+\\
		&&\leq\tilde{u}^{-\gamma}+f(z,\tilde{u},Dv)+\hat{\xi}_{\rho}\tilde{u}^{p-1}\ \mbox{for}\ \delta>0\ \mbox{small enough}\\
		&&(\mbox{since}\ f(z,\tilde{u},Dv)\geq\tilde{c}_{\tilde{\rho}}>0\ \mbox{for almost all}\ z\in\Omega,\ \mbox{see}\ H(f)(i))\\
		&&\leq u^{-\gamma}+f(z,u,Dv)+\hat{\xi}_{\rho}u^{p-1}\ (\mbox{since}\ \tilde{u}\leq u)\\
		&&=-\Delta_pu+(\xi(z)+\hat{\xi}_{\rho})u^{p-1}\ \mbox{for almost all}\ z\in\Omega\ (\mbox{since}\ u\in S_v),\\
		&\Rightarrow&\tilde{u}^{\delta}\leq u\ \mbox{for small enough}\ \delta>0,\\
		&\Rightarrow&u-\tilde{u}\in D_+.
	\end{eqnarray*}
	
	Similarly, for $\delta>0$ let $u^{\delta}=u+\delta\in D_+$. Then
	\begin{eqnarray*}
		&&-\Delta_pu^{\delta}+(\xi(z)+\hat{\xi}_{\rho})(u^{\delta})^{p-1}\\
		&&\leq-\Delta_pu+(\xi(z)+\hat{\xi}_{\rho})u^{p-1}+\tilde{\lambda}(\lambda)\ \mbox{with}\ \tilde{\lambda}(\delta)\rightarrow 0^+\ \mbox{as}\ \delta\rightarrow 0^+\\
		&&=u^{-\gamma}+f(z,u,Dv)+\hat{\xi}_{\rho}u^{p-1}+\tilde{\lambda}(\delta)\ (\mbox{since}\ u\in S_v)\\
		&&\leq w^{-\gamma}+f(z,w,Dv)+\hat{\xi}_{\rho}u^{p-1}+\tilde{\lambda}(\delta)\ (\mbox{since}\ u\leq w)\\
		&&\leq-c^*+\tilde{\lambda}(\delta)+\hat{\xi}_{\rho}u^{p-1}\ (\mbox{see hypothesis}\ H(f)(i))\\
		&&\leq-\Delta_pw+(\xi(z)+\hat{\xi}_p)w^{p-1}\ \mbox{for almost all}\ z\in\Omega\ \mbox{and for small enough}\ \delta>0\\
		&&(\mbox{since}\ \tilde{\lambda}(\delta)\rightarrow 0^+\ \mbox{as}\ \delta\rightarrow 0^+\ \mbox{and due to hypothesis}\ H(f)(i)),\\
		&\Rightarrow&u^{\delta}\leq w\ \mbox{for small enough}\ \delta>0,\\
		&\Rightarrow&(w-u)(z)>0\ \mbox{for all}\ z\in\overline{\Omega}.
	\end{eqnarray*}
	
	Therefore we conclude that
	$$u\in {\rm int}_{C^1(\overline{\Omega})}[\tilde{u},w].$$
The proof 
of Proposition~\ref{prop6}
is now complete.
\end{proof}

We can show that $S_v$ admits a smallest element, that is, there exists $\hat{u}_v\in S_v$ such that $\hat{u}_v\leq u$ for all $u\in S_v$.
\begin{proposition}\label{prop7}
	If hypotheses $H(\xi),\,H(f)$ hold, then for every $v\in C^1(\overline{\Omega})$, the solution set $S_v$ admits a smallest element
	$$\hat{u}_v\in S_v.$$
\end{proposition}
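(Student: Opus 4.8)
The plan is to show first that $S_v$ is downward directed (i.e., for any $u_1,u_2\in S_v$ there is $y\in S_v$ with $y\le\min\{u_1,u_2\}$), then to extract from $S_v$ a decreasing sequence converging pointwise to $\hat u_v(z):=\inf\{u(z):u\in S_v\}$, and finally to pass to the limit in the equation to see that $\hat u_v$ itself belongs to $S_v$. Since $\hat u_v\le u$ for every $u\in S_v$ by construction, $\hat u_v$ is then the smallest element.

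\textbf{Directedness.} Fix $u_1,u_2\in S_v$ and set $\hat m=\min\{u_1,u_2\}\in W^{1,p}(\Omega)\cap C^1(\overline\Omega)$; then $\tilde u\le\hat m\le w$, so all the functions in play take values in $[\tilde\rho,\rho]$, where $\tilde\rho=\min_{\overline\Omega}\tilde u>0$ and $\rho=\|w\|_\infty$. I would work with the monotonized reaction $g(z,x)=x^{-\gamma}+f(z,x,Dv(z))+\hat\xi_\rho x^{p-1}$, which by the argument in the proof of Proposition \ref{prop6} is nondecreasing in $x$ on $[\tilde\rho,\rho]$, and with the shifted operator $u\mapsto-\Delta_pu+(\xi(z)+\hat\xi_\rho)u^{p-1}$. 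Truncating $g$ at $\tilde u$ from below and at $\hat m$ from above exactly as in \eqref{eq43}, one forms the associated coercive, sequentially weakly lower semicontinuous $C^1$-functional on $W^{1,p}(\Omega)$ and lets $y$ be its global minimizer. Testing the Euler equation for $y$ with $(\tilde u-y)^+$ and using \eqref{eq41} yields $\tilde u\le y$; testing it with $(y-u_i)^+$ and noting that on $\{y>u_i\}\subseteq\{y>\hat m\}$ the truncated reaction equals $g(z,\hat m)\le g(z,u_i)$ (monotonicity on $[\tilde\rho,\rho]$), while $u_i$ solves the shifted equation with right-hand side $g(z,u_i)$, yields $y\le u_i$ for $i=1,2$, hence $y\le\hat m$; in both comparisons one only uses the monotonicity of $A$ (Proposition \ref{prop2}) and of $t\mapsto t^{p-1}$. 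On $[\tilde u,\hat m]$ the truncated reaction coincides with $g$, so after cancelling the $\hat\xi_\rho$-terms the Euler equation becomes the weak form of \eqref{eq42}; invoking Papageorgiou \& R\u{a}dulescu \cite{18,19} and Lieberman \cite{15} for regularity as in Proposition \ref{prop5}, we conclude $y\in S_v$ and $y\le\min\{u_1,u_2\}$.

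\textbf{Passage to the limit.} Since $S_v\subseteq W^{1,p}(\Omega)$, which is separable, and $S_v$ is downward directed, a standard argument produces a decreasing sequence $\{u_n\}_{n\ge1}\subseteq S_v$ with $u_n(z)\downarrow\hat u_v(z)$ for a.a.\ $z\in\Omega$. Because $\tilde u\le u_n\le w$ with $\tilde u\in D_+$, we have $\hat u_v\ge\tilde u>0$, so the singular term stays bounded ($u_n^{-\gamma}\le\tilde\rho^{-\gamma}$), and since $v$ is fixed, $H(f)(i)$ gives $|f(z,u_n(z),Dv(z))|\le\hat a_\rho(z)[1+|Dv(z)|^{p-1}]\in L^\infty(\Omega)$, uniformly in $n$; testing the weak form of \eqref{eq48} for $u_n$ with $u_n$ and using Lemma \ref{lem3} shows $\{u_n\}$ is bounded in $W^{1,p}(\Omega)$, so along a subsequence $u_n\stackrel{w}{\to}\hat u_v$ in $W^{1,p}(\Omega)$ and $u_n\to\hat u_v$ in $L^p(\Omega)$ and a.e. Testing the weak form of \eqref{eq48} for $u_n$ with $h=u_n-\hat u_v$, the right-hand side and the $\xi$-term tend to $0$ (bounded integrands times $u_n-\hat u_v\to0$ in $L^1(\Omega)$), so $\limsup_{n\to\infty}\langle A(u_n),u_n-\hat u_v\rangle\le0$, whence $u_n\to\hat u_v$ in $W^{1,p}(\Omega)$ by the $(S)_+$-property of $A$. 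Finally, using the continuity of $A$, dominated convergence for the $\xi$- and singular terms (dominated by $\|\xi\|_\infty\rho^{p-1}|h|$ and $\tilde\rho^{-\gamma}|h|$), and the Carath\'eodory property of $f$ with $\hat a_\rho[1+|Dv|^{p-1}]|h|\in L^1(\Omega)$ as dominating function, we pass to the limit in \eqref{eq48} to obtain that $\hat u_v$ is a weak solution of \eqref{eq42} lying in $[\tilde u,w]$; thus $\hat u_v\in S_v$, and since $\hat u_v(z)=\inf\{u(z):u\in S_v\}$, it is the smallest element of $S_v$.

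The main obstacle is the directedness step, specifically the verification that $y\le\min\{u_1,u_2\}$: this is where the monotonization $x\mapsto x^{-\gamma}+f(z,x,Dv(z))+\hat\xi_\rho x^{p-1}$ from $H(f)(iii)$ (already exploited in Proposition \ref{prop6}) is essential, since the unshifted reaction $x\mapsto x^{-\gamma}+f(z,x,Dv(z))$ need not be monotone and the weak comparison argument would otherwise break down. The remaining ingredients — boundedness of the minimizing sequence, the $(S)_+$-convergence, and the final passage to the limit — are routine given Proposition \ref{prop2} and hypothesis $H(f)(i)$.
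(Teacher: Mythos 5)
Your proof is correct and, in one important respect, more complete than the paper's own argument. The paper's proof invokes Lemma~3.10 of Hu \& Papageorgiou to produce a sequence $\{u_n\}\subseteq S_v$ with $\operatorname{essinf}S_v=\inf_n u_n$, then extracts a convergent subsequence $u_n\to\hat u_v$ in $W^{1,p}(\Omega)$ via the $(S)_+$-property, and concludes that $\hat u_v\in S_v$ equals $\operatorname{essinf}S_v$. For that last identification one needs the sequence to be (essentially) decreasing, which in turn requires $S_v$ to be downward directed --- a fact the paper never verifies. You make this step explicit: using the monotonized reaction $g(z,x)=x^{-\gamma}+f(z,x,Dv(z))+\hat\xi_\rho x^{p-1}$ (already set up in Proposition~\ref{prop6}), you truncate at $\tilde u$ and $\hat m=\min\{u_1,u_2\}$, minimize the associated energy, and run the two standard weak comparisons to obtain $y\in S_v$ with $y\le\hat m$; nonlinear regularity then places $y$ in $C^1(\overline\Omega)$. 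The passage to the limit that follows (boundedness from the $L^\infty$ bounds on the reaction, $(S)_+$-convergence, dominated convergence in the nonlinear terms) matches the paper's and is correct. So the two proofs take the same overall route, but yours supplies the directedness lemma the paper tacitly relies on, which is a genuine improvement in rigor.

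One small point of care, which you did handle: in the comparison giving $y\le u_i$, the inclusion $\{y>u_i\}\subseteq\{y>\hat m\}$ and the monotonicity of $g(z,\cdot)$ on $[\tilde\rho,\rho]$ are both needed, and the latter indeed requires enlarging $\hat\xi_\rho$ beyond what $H(f)(iii)$ gives so as to absorb the derivative of $x\mapsto x^{-\gamma}$ on $[\tilde\rho,\rho]$; citing the adjustment made in Proposition~\ref{prop6} is exactly right.
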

\begin{proof}
	Invoking Lemma 3.10 in Hu \& Papageorgiou \cite[p. 178]{13}, we can find a sequence \\$\{u_n\}_{n\geq 1}\subseteq S_v$ such that
	$${\rm essinf}\, S_v=\inf\limits_{n\geq 1}u_n.$$
	
	For every $n\in{\mathbb N}$, we have
	\begin{eqnarray}
		&&\left\langle A(u_n),h\right\rangle+\int_{\Omega}\xi(z)u_n^{p-1}hdz=\int_{\Omega}[u_n^{-\gamma}+f(z,u_n,Dv)]hdz\label{eq49}\\
		&&\mbox{for all}\ h\in W^{1,p}(\Omega),\ n\in{\mathbb N},\nonumber\\
		&&\tilde{u}\leq u_n\leq w\ \mbox{for all}\ n\in{\mathbb N}.\label{eq50}
	\end{eqnarray}
	
	It follows from (\ref{eq49}) and (\ref{eq50}) that
	$$\{u_n\}_{n\geq 1}\subseteq W^{1,p}(\Omega)\ \mbox{is bounded}.$$
	
	So, we may assume that
	\begin{eqnarray}\label{eq51}
		u_n\stackrel{w}{\rightarrow}\hat{u}_v\ \mbox{in}\ W^{1,p}(\Omega)\ \mbox{and}\ u_n\rightarrow\hat{u}_v\ \mbox{in}\ L^p(\Omega)\ \mbox{as}\ n\rightarrow\infty,\ \hat{u}_v\in[\tilde{u},w].
	\end{eqnarray}
	
	In (\ref{eq49}) we choose $h=u_n-\hat{u}_v\in W^{1,p}(\Omega)$, pass to the limit as $n\rightarrow\infty$, and use (\ref{eq51}). Then
	\begin{eqnarray}\label{eq52}
		&&\lim\limits_{n\rightarrow\infty}\left\langle A(u_n),u_n-\hat{u}_v\right\rangle=0\ \mbox{see (\ref{eq50})},\nonumber\\
		&\Rightarrow&u_n\rightarrow\hat{u}_v\ \mbox{in}\ W^{1,p}(\Omega)\ (\mbox{see Proposition \ref{prop2}}).
	\end{eqnarray}
	
	Therefore, if in (\ref{eq49}) we pass to the limit as $n\rightarrow\infty$ and use (\ref{eq52}), then
	\begin{eqnarray*}
		&&\left\langle A(\hat{u}_v),h\right\rangle+\int_{\Omega}\xi(z)\hat{u}_v^{p-1}hdz=\int_{\Omega}[\hat{u}_v^{-\gamma}+f(z,\hat{u}_v,Dv)]hdz\\
		&&\mbox{for all}\ h\in W^{1,p}(\Omega),\\
		&\Rightarrow&\hat{u}_v\in S_v\subseteq D_+\ \mbox{and }{\rm essinf}\, S_v=\hat{u}_v.
	\end{eqnarray*}
The proof of Proposition \ref{prop7} is now complete.
\end{proof}

We can define a map $\sigma:C^1(\overline{\Omega})\rightarrow C^1(\overline{\Omega})$ by
$$\sigma(v)=\hat{u}_v.$$

This map is well-defined by Proposition \ref{prop7} and any fixed point of $\sigma(\cdot)$ is a solution of problem (\ref{eq1}). To generate a fixed point for $\sigma(\cdot)$, we will use Theorem \ref{th1} (the Leray-Schauder alternative principle). For this purpose, the next lemma will be useful.

\begin{lemma}\label{lem8}
	If hypotheses $H(\xi),\,H(f)$ hold, $\{v_n\}_{n\geq 1}\subseteq C^1(\overline{\Omega})$, $v_n\rightarrow v$ in $C^1(\overline{\Omega})$, and $u\in S_v$, then for every
	$n\in{\mathbb N}$ there exists $u_n\in S_{v_n}$ such that $u_n\rightarrow u$ in $C^1(\overline{\Omega})$.
\end{lemma}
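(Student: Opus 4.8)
The plan is to sandwich the sought solution $u_n$ between a subsolution and a supersolution of the $v_n$-analogue of $(\ref{eq42})$, both of which will be forced to converge in $C^1(\overline{\Omega})$ to the prescribed $u$, and then to extract $u_n\in S_{v_n}$ between them exactly as in Proposition~\ref{prop5}. Fix $n$. Since $Dv_n\to Dv$ in $C(\overline{\Omega};\RR^N)$, all the gradients involved lie in one compact set, so by the bound $|f(z,x,y)|\le\hat a_\rho(z)[1+|y|^{p-1}]$ of $H(f)(i)$ and dominated convergence the Carathéodory functions $\underline g_n(z,x)=\min\{f(z,x,Dv(z)),f(z,x,Dv_n(z))\}$ and $\overline g_n(z,x)=\max\{f(z,x,Dv(z)),f(z,x,Dv_n(z))\}$ converge to $f(\cdot,x,Dv)$ in $L^{p'}(\Omega)$, uniformly for $x$ in the range $[\min_{\overline\Omega}\tilde u,\rho]$, while $\underline g_n\le f(\cdot,\cdot,Dv_n)\le\overline g_n$ and $\underline g_n\le f(\cdot,\cdot,Dv)\le\overline g_n$ pointwise. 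The decisive structural point is that the one-sided bounds that make $\tilde u$ a subsolution and $w$ a supersolution in Proposition~\ref{prop5} hold for \emph{every} gradient argument: $f(z,\tilde u(z),y)\ge\tilde c_{\tilde\rho}>0$ for all $y$ (by $H(f)(ii)$), hence $\underline g_n(z,\tilde u(z))\ge\tilde c_{\tilde\rho}>0$; and $w(z)^{-\gamma}+f(z,w(z),y)\le-c^*<0$ for all $y$ (by $H(f)(i)$), hence $w(z)^{-\gamma}+\overline g_n(z,w(z))\le-c^*<0$.

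Next I would consider the problem $(\underline P_n)$ obtained from $(\ref{eq42})$ with $v=v_n$ by replacing $f$ by $\underline g_n$, and similarly $(\overline P_n)$ with $\overline g_n$. Arguing as in Proposition~\ref{prop5} (truncating now between $\tilde u$ and $u$, using that $\tilde u$ is a subsolution of $(\underline P_n)$ by $(\ref{eq41})$ and the above, while $u$—which solves the $v$-problem and satisfies $\underline g_n(\cdot,u)\le f(\cdot,u,Dv)$—is a supersolution of $(\underline P_n)$, with $\tilde u\le u$) one gets a solution $\underline u_n\in[\tilde u,u]$ of $(\underline P_n)$; symmetrically, $u$ being a subsolution and $w$ a supersolution of $(\overline P_n)$ with $u\le w$, one gets a solution $\overline u_n\in[u,w]$ of $(\overline P_n)$. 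From $\underline g_n\le f(\cdot,\cdot,Dv_n)$ and $\overline g_n\ge f(\cdot,\cdot,Dv_n)$ it follows that $\underline u_n$ is a subsolution and $\overline u_n$ a supersolution of the $v_n$-problem, with $\underline u_n\le u\le\overline u_n$; truncating the $v_n$-reaction to $[\underline u_n,\overline u_n]$ and minimizing the associated $C^1$-functional (using the monotonicity from $H(f)(iii)$ after enlarging $\hat\xi_\rho$ uniformly, as in Proposition~\ref{prop6}) produces $u_n\in S_{v_n}$ with $\underline u_n\le u_n\le\overline u_n$. Since $\underline u_n,\overline u_n,u_n$ all lie in $[\tilde u,w]$ their reactions are bounded in $L^\infty(\Omega)$ uniformly in $n$, so Theorem~2 of Lieberman \cite{15} yields a uniform $C^{1,\beta}(\overline{\Omega})$-bound; hence, along a subsequence, $\underline u_n\to\underline u$, $\overline u_n\to\overline u$, $u_n\to\hat u$ in $C^1(\overline{\Omega})$, and passing to the limit in the weak formulations (the uniform lower bound $\tilde u$ handles the singular terms; $Dv_n\to Dv$, the $C^1$-convergence of the solutions and $L^\infty$-dominated a.e.\ convergence handle the reaction) gives $\underline u,\overline u,\hat u\in S_v$ with $\underline u\le u\le\overline u$ and $\underline u\le\hat u\le\overline u$.

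It then remains to identify the limits: once $\underline u_n\to u$ and $\overline u_n\to u$ in $C^1(\overline{\Omega})$, the squeeze $\underline u_n\le u_n\le\overline u_n$ together with the $C^1$-precompactness of $\{u_n\}$ forces $u_n\to u$. To obtain $\underline u_n\to u$ and $\overline u_n\to u$ I would take $\underline u_n$ to be the \emph{largest} solution of $(\underline P_n)$ in $[\tilde u,u]$ and $\overline u_n$ the \emph{smallest} solution of $(\overline P_n)$ in $[u,w]$ (these extremal solutions exist by monotone iteration from the supersolution $u$, resp.\ from the subsolution $u$), and then control $u-\underline u_n$ and $\overline u_n-u$ by comparing the finitely-iterated monotone sequences for $(\underline P_n)$, $(\overline P_n)$ with those for the $v$-problem, whose reaction they approximate in $L^{p'}(\Omega)$; since $u$ is itself the largest (resp.\ smallest) solution of the $v$-problem in $[\tilde u,u]$ (resp.\ $[u,w]$), this should pin $\underline u=\overline u=u$. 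I expect this last step to be the main obstacle: the $v$-problem is \emph{not} uniquely solvable, so the limit of an arbitrary sequence $u_n\in S_{v_n}$ is merely \emph{some} element of $S_v$, and it is precisely the use of the extremal solutions of the perturbed problems $(\underline P_n),(\overline P_n)$, together with a control — uniform in $n$ — of the monotone iteration near $u$, that has to be executed carefully; everything else reduces to the truncation/comparison/regularity machinery already developed in Propositions~\ref{prop5} and \ref{prop6}.
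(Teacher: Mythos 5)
Your strategy is genuinely different from the paper's, and the obstacle you flag in your last paragraph is a real gap, not a technicality. Nothing in your argument prevents the largest solution $\underline u_n$ of $(\underline P_n)$ in $[\tilde u,u]$ from converging, along a subsequence, to some \emph{other} solution $u'\in S_v$ with $\tilde u\le u'<u$. Finitely many steps of the monotone iteration from the supersolution $u$ do converge to $u$ as $n\to\infty$ (because the $v$-iteration is stationary at $u$), but $\underline u_n$ is the $k\to\infty$ limit of that iteration for \emph{fixed} $n$, and interchanging $k\to\infty$ with $n\to\infty$ requires a uniformity you have not supplied. Lower semicontinuity of the solution set under perturbation of the data is generally false, and the property ``largest solution of $(\underline P_n)$ in $[\tilde u,u]$'' does not pass to the limit without additional structure. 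The same issue affects $\overline u_n$. So the squeeze at the end is not justified.

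The paper sidesteps this by anchoring an iteration at the prescribed $u$ from the very first step, which makes the limit identification automatic. One first solves the problem with reaction $u(z)^{-\gamma}+f(z,u(z),Dv_n(z))$, i.e.\ with the state variable frozen at the known $u$ and only the gradient argument changed from $Dv$ to $Dv_n$; the left side is a strictly monotone coercive operator, so there is a unique $y^0_n$, and by continuity of the data and Lieberman's estimates $y^0_n\to u$ in $C^1(\overline\Omega)$. One then iterates: $y^{k+1}_n$ solves the frozen problem with reaction $(y^k_n)^{-\gamma}+f(\cdot,y^k_n,Dv_n)$. For each fixed $k$, $y^k_n\to u$ in $C^1$ as $n\to\infty$; for each fixed $n$, the uniform $C^{1,\alpha}$ bounds give a subsequence $y^{k_m}_n\to\hat y_n$ in $C^1$ with $\hat y_n$ solving the $v_n$-problem. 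The interchange of the two limits — exactly the point where your proof stalls — is then handled by the double limit lemma (\cite[Problem 1.175, p.~61]{4}), yielding $\hat y_n\to u$ in $C^1(\overline\Omega)$ and, via Proposition~\ref{prop6}, $\hat y_n\in S_{v_n}$ for $n$ large. In short, rather than sandwiching by extremal sub/supersolutions of perturbed problems and then trying to show the sandwich collapses onto $u$, the paper iterates starting \emph{from} $u$ and lets the double limit lemma pin the diagonal sequence to $u$.
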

\begin{proof}
	We consider the following nonlinear Neumann problem
	\begin{equation}\label{eq53}
		\left\{\begin{array}{l}
			-\Delta_py(z)+\xi(z)|y(z)|^{p-2}y(z)=u(z)^{-\gamma}+f(z,u(z),Dv_n(z))\ \mbox{in}\ \Omega,\\
	\displaystyle			\frac{\partial y}{\partial n}=0\ \mbox{on}\ \partial\Omega.
		\end{array}\right\}
	\end{equation}
	
	Since $u\in S_v\subseteq D_+$, we have
	\begin{equation}\label{eq54}
		\left\{\begin{array}{l}
			k_n(z)=u(z)^{-\gamma}+f(z,u(z),Dv_n(z))\geq 0\ \mbox{for almost all}\ z\in\Omega\ \mbox{and all}\ n\in{\mathbb N},\\
			\{k_n\}_{n\geq 1}\subseteq L^{\infty}(\Omega)\ \mbox{is bounded},k_n\neq 0\ \mbox{for all}\ n\in{\mathbb N}\\
			(\mbox{see hypotheses}\ H(f)(i),(ii)).
		\end{array}\right\}
	\end{equation}
	
	In problem (\ref{eq53}), the left-hand side determines a maximal monotone coercive operator (see Lemma \ref{lem3}), which is strictly monotone. Therefore, on account of (\ref{eq54}), problem (\ref{eq53}) admits a unique solution $y_n^0\in W^{1,p}(\Omega)$, $y^0_n\neq 0$. We have for all $n\in{\mathbb N}$
	\begin{eqnarray}\label{eq55}
		&&\left\langle A(y^0_n),h\right\rangle+\int_{\Omega}\xi(z)|y^0_n|^{p-2}y^0_nhdz=\int_{\Omega}k_n(z)hdz\ \mbox{for all}\ h\in W^{1,p}(\Omega).
	\end{eqnarray}
	
	In (\ref{eq55}) we choose $h=-(y^0_n)^-\in W^{1,p}(\Omega)$. Then
	\begin{eqnarray*}
		&&\vartheta((y^0_n)^-)\leq 0\ (\mbox{see (\ref{eq54})}),\\
		&\Rightarrow&c_1||(y^0_n)^-||^p\leq 0\ (\mbox{see Lemma \ref{lem3}}),\\
		&\Rightarrow&y^0_n\geq 0,\ y^0_n\neq 0\ \mbox{for all}\ n\in{\mathbb N}.
	\end{eqnarray*}
	
	Also, it is clear from (\ref{eq54}) and (\ref{eq55}) that
	$$\{y^0_n\}_{n\geq 1}\subseteq W^{1,p}(\Omega)\ \mbox{is bounded.}$$
	
	Invoking Proposition 7 of Papageorgiou \& R\u{a}dulescu \cite{19}, we have
	\begin{equation}\label{eq56}
		y^0_n\in L^{\infty}(\Omega)\ \mbox{and}\ ||y^0_n||_{\infty}\leq c_5\ \mbox{for some}\ c_5>0\ \mbox{and all}\ n\in{\mathbb N}.
	\end{equation}
	
	Then (\ref{eq53}) and Theorem 2 of Lieberman \cite{15} imply that there exist $\alpha\in(0,1)$ and $c_6>0$ such that
	\begin{equation}\label{eq57}
		y^0_n\in C^{1,\alpha}(\overline{\Omega})\ \mbox{and}\ ||y^0_n||_{C^{1,\alpha}(\overline{\Omega})}\leq c_6\ \mbox{for all}\ n\in{\mathbb N}.
	\end{equation}
	
	Recall that $C^{1,\alpha}(\overline{\Omega})$ is  compactly
	embedded in $C^1(\overline{\Omega})$. So, from (\ref{eq57}) we see that we can find a subsequence $\{y^0_{n_k}\}_{k\geq 1}$ of $\{y^0_n\}_{n\geq 1}$ such that
	\begin{equation}\label{eq58}
		y^0_{n_k}\rightarrow y^0\ \mbox{in}\ C^1(\overline{\Omega})\ \mbox{as}\ k\rightarrow\infty,\ y^0\geq 0.
	\end{equation}
	
	Note that
	\begin{equation}\label{eq59}
		k_n\rightarrow k\ \mbox{in}\ L^{p'}(\Omega)\ \mbox{with}\ k(z)=u(z)^{-\gamma}+f(z,u(z),Dv(z)).
	\end{equation}
	
	Using (\ref{eq55}) (for the $y^0_{n_k}$'s) and (\ref{eq58}), (\ref{eq59}), we obtain
	\begin{eqnarray}\label{eq60}
		&&\left\langle A(y^0),h\right\rangle+\int_{\Omega}\xi(z)(y^0)^{p-1}hdz=\int_{\Omega}k(z)hdz\ \mbox{for all}\ h\in W^{1,p}(\Omega),\nonumber\\
		&\Rightarrow&-\Delta_py^0(z)+\xi(z)y^0(z)^{p-1}=u(z)^{-\gamma}+f(z,u(z),Dv(z))\ \mbox{for almost all}\ z\in\Omega,\\
		&&\frac{\partial y^0}{\partial n}=0\ \mbox{on}\ \partial\Omega.\nonumber
	\end{eqnarray}
	
	Problem (\ref{eq60}) admits a unique solution. Since $u\in S_v$, $u$ solves (\ref{eq60}) and so $y^0=u$. Therefore for the initial sequence we have
	\begin{equation}\label{eq61}
		y^0_n\rightarrow u\ \mbox{in}\ C^1(\overline{\Omega})\ \mbox{as}\ n\rightarrow\infty.
	\end{equation}
	
	Next, we consider the following nonlinear Neumann problem
	\begin{eqnarray*}
		\left\{\begin{array}{l}
			-\Delta_py(z)+\xi(z)|y(z)|^{p-2}y(z)=y^0_n(z)^{-\gamma}+f(z,y^0_n(z),Dv_n(z))\ \mbox{in}\ \Omega,\\
	\displaystyle			\frac{\partial y}{\partial n}=0\ \mbox{on}\ \partial\Omega.
		\end{array}\right\}
	\end{eqnarray*}
	
	Evidently, this problem has a unique solution $y^1_n\in D_+$ and
	$$y^1_n\rightarrow u\ \mbox{in}\ C^1(\overline{\Omega})\ \mbox{as}\ n\rightarrow\infty\ (\mbox{see (\ref{eq61})}).$$
	
	Continuing in this way, we produce a sequence $\{y^k_n\}_{k,n\in{\mathbb N}}$ such that
	\begin{equation}\label{eq62}
		\left\{\begin{array}{l}
			-\Delta_py^k_n(z)+\xi(z)y^k_n(z)^{p-1}=y^{k-1}_n(z)^{-\gamma}+f(z,y^{k-1}_n(z),Dv_n(z))\\
			 \mbox{for almost all}\ z\in\Omega,\\
		\displaystyle		\frac{\partial u^k_n}{\partial n}=0\ \mbox{on}\ \partial\Omega,\ k,n\in{\mathbb N}
		\end{array}\right\}
	\end{equation}
	\begin{equation}\label{eq63}
		\mbox{and}\ y^k_n\rightarrow u\ \mbox{in}\ C^1(\overline{\Omega})\ \mbox{as}\ n\rightarrow\infty\ \mbox{for all}\ k\in{\mathbb N}.
	\end{equation}
	
	From (\ref{eq59}), (\ref{eq60}) and Theorem 2 of Lieberman \cite{15}, we can deduce as before that
	$$\{y^k_n\}_{k\in {\mathbb N}}\subseteq C^1(\overline{\Omega})\ \mbox{is relatively compact.}$$
	
	So, we can find a subsequence $\{y^{k_m}_n\}_{m\in{\mathbb N}}$ of $\{y^k_n\}_{k\in{\mathbb N}}$ ($n\in{\mathbb N}$ is fixed) such that
	$$y^{k_m}_n\rightarrow\hat{y}_n\ \mbox{in}\ C^1(\overline{\Omega}),\ n\in{\mathbb N}.$$
	
	From (\ref{eq62}) in the limit we obtain
	\begin{equation}\label{eq64}
		\left\{\begin{array}{l}
			-\Delta_p\hat{y}_n(z)+\xi(z)\hat{y}_n(z)^{p-1}=\hat{y}_n(z)^{-\gamma}+f(z,\hat{y}_n(z),Dv_n(z))\ \mbox{for almost all}\ z\in\Omega,\\
	\displaystyle			\frac{\partial\hat{y}_n}{\partial n}=0\ \mbox{on}\ \partial\Omega.
		\end{array}\right\}
	\end{equation}
	
	Then, using Theorem 2 of Lieberman \cite{15} as before, \eqref{eq63} and the double limit lemma (see Gasinski \& Papageorgiou \cite[Problem 1.175, p. 61]{4}) we obtain
	\begin{eqnarray*}
		&&\hat{y}_n\rightarrow u\ \mbox{in}\ C^1(\overline{\Omega})\ \mbox{as}\ n\rightarrow\infty,\\
		&\mbox{and}&\hat{y}_n\in S_{v_n}\ \mbox{for}\ n\geq n_0\ (\mbox{see Proposition \ref{prop6}}).
	\end{eqnarray*}
The proof 
of Lemma~\ref{lem8}
is now complete.
\end{proof}

Using this lemma we can show that the minimal solution map $\sigma(\cdot)$ is compact.
\begin{proposition}\label{prop9}
	If hypotheses $H(\xi),\,H(f)$ hold, then the minimal solution map $\sigma:C^1(\overline{\Omega})\rightarrow C^1(\overline{\Omega})$ defined by $\sigma(v)=\hat{u}_v$ is compact.
\end{proposition}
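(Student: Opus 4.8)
The plan is to verify the two requirements in the definition of a compact map: continuity of $\sigma$, and the fact that $\sigma$ sends bounded subsets of $C^1(\overline{\Omega})$ to relatively compact subsets. The crucial structural fact, already available to us, is that for every $v\in C^1(\overline{\Omega})$ we have $S_v\subseteq[\tilde{u},w]$, so $\sigma(v)=\hat{u}_v$ takes values in a fixed order interval of $W^{1,p}(\Omega)$, and moreover Proposition \ref{prop6} places $\hat{u}_v$ in ${\rm int}_{C^1(\overline{\Omega})}[\tilde{u},w]$. The whole argument is driven by the uniform $L^\infty$ and $C^{1,\alpha}$ estimates coming from the regularity machinery (Proposition 7 of \cite{19} and Theorem 2 of \cite{15}).

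First I would establish relative compactness of $\sigma(B)$ for $B\subseteq C^1(\overline{\Omega})$ bounded. Since $\hat{u}_v\in[\tilde{u},w]$ for all $v$, the family $\{\hat{u}_v:v\in B\}$ is bounded in $L^\infty(\Omega)$ by $\rho=\|w\|_\infty$; using hypothesis $H(f)(i)$ (the bound $|f(z,x,y)|\le\hat{a}_\rho(z)[1+|y|^{p-1}]$ valid for $0\le x\le\rho$) together with the boundedness of $\{Dv:v\in B\}$ in $C(\overline{\Omega})$, the right-hand sides $\hat{u}_v^{-\gamma}+f(z,\hat{u}_v,Dv)$ are bounded in $L^\infty(\Omega)$ (note $\hat{u}_v^{-\gamma}\le\tilde{u}^{-\gamma}\le\tilde{\rho}^{-\gamma}$ since $\hat{u}_v\ge\tilde{u}\ge\tilde{\rho}>0$). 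Then Proposition 7 of \cite{19} gives a uniform bound on $\|\hat{u}_v\|_\infty$ (already have it) and Theorem 2 of \cite{15} yields $\alpha\in(0,1)$ and $c>0$ with $\|\hat{u}_v\|_{C^{1,\alpha}(\overline{\Omega})}\le c$ for all $v\in B$. Since $C^{1,\alpha}(\overline{\Omega})$ embeds compactly into $C^1(\overline{\Omega})$, the set $\sigma(B)$ is relatively compact in $C^1(\overline{\Omega})$.

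Next I would prove continuity of $\sigma$. Let $v_n\to v$ in $C^1(\overline{\Omega})$; I must show $\hat{u}_{v_n}\to\hat{u}_v$ in $C^1(\overline{\Omega})$. By the uniform $C^{1,\alpha}$ bound just obtained, every subsequence of $\{\hat{u}_{v_n}\}$ has a further subsequence converging in $C^1(\overline{\Omega})$ to some limit $\tilde{u}^*\in[\tilde{u},w]$; passing to the limit in the weak formulation (\ref{eq49}) for $\hat{u}_{v_n}$, using $Dv_n\to Dv$ uniformly and the Carath\'eodory continuity of $f$ together with dominated convergence, shows $\tilde{u}^*\in S_v$, hence $\hat{u}_v\le\tilde{u}^*$. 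For the reverse inequality I would invoke Lemma \ref{lem8}: applied with $u=\hat{u}_v\in S_v$, it produces $u_n\in S_{v_n}$ with $u_n\to\hat{u}_v$ in $C^1(\overline{\Omega})$; since $\hat{u}_{v_n}\le u_n$ by minimality, passing to the limit along the chosen subsequence gives $\tilde{u}^*\le\hat{u}_v$. Thus $\tilde{u}^*=\hat{u}_v$, and because every subsequence has a further subsequence converging to the same limit $\hat{u}_v$, the whole sequence $\hat{u}_{v_n}\to\hat{u}_v$ in $C^1(\overline{\Omega})$. Therefore $\sigma$ is continuous, and combined with the previous paragraph, $\sigma$ is compact.

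The main obstacle is the continuity argument, specifically controlling the minimal solution under perturbation of $v$: minimality is not a closed condition under mere weak or even $C^1$ limits, so one genuinely needs the two-sided squeeze — the limit of $\hat{u}_{v_n}$ is some solution in $S_v$ (hence $\ge\hat{u}_v$), while Lemma \ref{lem8} supplies an approximating sequence of solutions in $S_{v_n}$ converging to $\hat{u}_v$ (forcing the limit $\le\hat{u}_v$). Getting Lemma \ref{lem8} to apply cleanly is what makes the squeeze close. The regularity-theoretic uniform bounds are routine given the cited results, but care is needed to check that the $L^\infty$-bound on the reaction terms is genuinely uniform in $v\in B$, which is where the gradient bound $|f(z,x,y)|\le\hat{a}_\rho(z)[1+|y|^{p-1}]$ from $H(f)(i)$ and the boundedness of $B$ in $C^1(\overline{\Omega})$ are used in tandem.
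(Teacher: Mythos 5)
Your proposal is correct and follows essentially the same route as the paper: uniform $L^\infty$ and $C^{1,\alpha}$ bounds via \cite{19} and \cite{15} together with the compact embedding $C^{1,\alpha}(\overline{\Omega})\hookrightarrow C^1(\overline{\Omega})$ give relative compactness, and continuity is obtained by the same two-sided squeeze — a $C^1$-subsequential limit of $\hat{u}_{v_n}$ lies in $S_v$ (hence dominates $\hat{u}_v$), while Lemma \ref{lem8} applied to $\hat{u}_v\in S_v$ produces $u_n\in S_{v_n}$ with $u_n\to\hat{u}_v$, and minimality ($\hat{u}_{v_n}\le u_n$) forces the limit to equal $\hat{u}_v$. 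The only difference is cosmetic: you establish the compactness of images first and continuity second, whereas the paper does it in the opposite order.
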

\begin{proof}
	We first show that $\sigma(\cdot)$ is continuous. To this end, let $v_n\rightarrow v$ in $C^1(\overline{\Omega})$ and 
	 $\hat{u}_n=\hat{u}_{v_n}=\sigma(v_n)$, $n\in{\mathbb N}$. We have
	\begin{eqnarray}\label{eq65}
		&&\left\langle A(\hat{u}_n),h\right\rangle+\int_{\Omega}\xi(z)\hat{u}^{p-1}_nhdz=\int_{\Omega}[\hat{u}_n^{-\gamma}+f(z,\hat{u}_n,Dv_n)]hdz\\
		&&\mbox{for all}\ h\in W^{1,p}(\Omega),\  n\in{\mathbb N}.\nonumber
	\end{eqnarray}
	
	Choosing $h=\hat{u}_n\in W^{1,p}(\Omega)$, we obtain
	\begin{eqnarray*}
		&&||D\hat{u}_n||^p_p+\int_{\Omega}\xi(z)\hat{u}^p_pdz\leq\int_{\Omega}c_7[\tilde{u}^{-\gamma}+1]dz\ \mbox{for some}\ c_7>0,\ \mbox{and all}\ n\in{\mathbb N}\\
		&&(\mbox{since}\ \tilde{u}\leq\hat{u}_n\leq w\ \mbox{for all}\ n\in{\mathbb N}\ \mbox{and due to hypothesis}\ H(f)(ii)),\\
		&\Rightarrow&c_1||\hat{u}_n||^p\leq c_8\ \mbox{for some}\ c_8>0\ \mbox{and all}\ n\in{\mathbb N}\ (\mbox{see Lemma \ref{lem3}}),\\
		&\Rightarrow&\{\hat{u}_n\}_{n\in{\mathbb N}}\subseteq W^{1,p}(\Omega)\ \mbox{is bounded}.
	\end{eqnarray*}
	
	Invoking Proposition 7 of Papageorgiou \& R\u{a}dulescu \cite{19}, we have
	$$||\hat{u}_n||_{\infty}\leq c_9\ \mbox{for some}\ c_9>0\ \mbox{and all}\ n\in{\mathbb N}.$$
	
	Then Theorem 2 of Lieberman \cite{15} implies that we can find $\beta\in(0,1)$ and $c_{10}>0$ such that
	\begin{equation}\label{eq66}
		\hat{u}_n\in C^{1,\beta}(\overline{\Omega})\ \mbox{and}\ ||\hat{u}_n||_{C^{1,\beta}(\overline{\Omega})}\leq c_{10}\ \mbox{for all}\ n\in{\mathbb N}.
	\end{equation}
	
	The compact embedding of $C^{1,\beta}(\overline{\Omega})$ into $C^1(\overline{\Omega})$ and (\ref{eq66})
	 imply that at least for a subsequence, we have
	\begin{equation}\label{eq67}
		\hat{u}_n\rightarrow\hat{u}\ \mbox{in}\ C^1(\overline{\Omega})\ \mbox{as}\ n\rightarrow\infty.
	\end{equation}
	
	Passing to the limit as $n\rightarrow\infty$ in (\ref{eq65}), we can infer that $\hat{u}\in S_v$.
	
	We know that $\sigma(v)\in S_v$ and so by Lemma \ref{lem8}, we can find $u_n\in S_{v_n}$ (for all $n\in{\mathbb N}$) such that
	\begin{equation}\label{eq68}
		u_n\rightarrow\sigma(v)\ \mbox{in}\ C^1(\overline{\Omega})\ \mbox{as}\ n\rightarrow+\infty.
	\end{equation}
	
	We have
	\begin{eqnarray*}
		&&\hat{u}_n\leq u_n\ \mbox{for all}\ n\in{\mathbb N},\\
		&\Rightarrow&\hat{u}\leq\sigma(v),\\
		&\Rightarrow&\sigma(v)=\hat{u}\ (\mbox{since}\ \hat{u}\in S_v).
	\end{eqnarray*}
	
	So, for the original sequence $\{\hat{u}_n=\sigma(v_n)\}_{n\in{\mathbb N}}\subseteq C^1(\overline{\Omega})$, we have
	\begin{eqnarray*}
		&&\sigma(v_n)=\hat{u}_n\rightarrow\hat{u}=\sigma(v)\ \mbox{in}\ C^1(\overline{\Omega}),\\
		&\Rightarrow&\sigma(\cdot)\ \mbox{is continuous}.
	\end{eqnarray*}
	
	Next, let $B\subseteq C^1(\overline{\Omega})$ be bounded. As before, we obtain
	\begin{eqnarray*}
		&&\sigma(B)\subseteq W^{1,p}(\Omega)\ \mbox{is bounded},\\
		&\Rightarrow&\sigma(B)\subseteq L^{\infty}(\Omega)\ \mbox{is bounded (see \cite{19})}.
	\end{eqnarray*}
	
	Then by Lieberman \cite{15} we conclude that
	$$\overline{\sigma(B)}\subseteq C^1(\overline{\Omega})\ \mbox{is compact}.$$
	
	This proves that the minimal solution map $\sigma(\cdot)$ is compact.
	The proof 
of Proposition~\ref{prop9}
is now complete.
\end{proof}

Now using Theorem \ref{th1} (the Leray-Schauder alternative principle),
 we will produce a positive smooth solution for problem (\ref{eq1}).
\begin{theorem}\label{th10}
	If hypotheses $H(\xi),\,H(f)$ hold, then problem (\ref{eq1}) admits a positive solution $u^*\in D_+$.
\end{theorem}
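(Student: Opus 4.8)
The plan is to apply the Leray--Schauder alternative (Theorem~\ref{th1}) to the minimal solution map $\sigma:C^1(\overline{\Omega})\to C^1(\overline{\Omega})$, $\sigma(v)=\hat{u}_v$, which is compact by Proposition~\ref{prop9}. Since any fixed point $u^*$ of $\sigma$ satisfies $u^*=\sigma(u^*)\in S_{u^*}\subseteq D_+$ and therefore solves (\ref{eq42}) with $v=u^*$ --- which is precisely problem (\ref{eq1}) --- it will suffice to exclude alternative (b), that is, to prove that
$$K(\sigma)=\{u\in C^1(\overline{\Omega}):u=t\sigma(u)\ \mbox{for some}\ 0<t<1\}$$
is bounded in $C^1(\overline{\Omega})$.

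To that end I would fix $u\in K(\sigma)$, write $u=t\,\hat{u}_u$ with $\hat{u}_u=\sigma(u)\in S_u\subseteq[\tilde{u},w]$, and first record the cheap but essential bound $0<t\tilde{u}\le u=t\hat{u}_u\le w$ (using $0<t<1$ and $\hat{u}_u\le w$), so that $||u||_{\infty}\le\rho:=||w||_{\infty}$ uniformly over $K(\sigma)$. Next I would rewrite the equation satisfied by $\hat{u}_u=u/t$: by the $(p-1)$-homogeneity of $-\Delta_p$, multiplying (\ref{eq42}) (with $v=u$) through by $t^{p-1}$ gives
$$-\Delta_p u+\xi(z)u^{p-1}=t^{p-1}(u/t)^{-\gamma}+t^{p-1}f(z,u/t,Du)=:g_u(z)\ \mbox{in}\ \Omega,\qquad \frac{\partial u}{\partial n}=0\ \mbox{on}\ \partial\Omega.$$
Here I would use $u/t=\hat{u}_u\ge\tilde{u}\ge\tilde{\rho}:=\min_{\overline{\Omega}}\tilde{u}>0$ together with $t^{p-1}<1$ to bound the singular part by $t^{p-1}(u/t)^{-\gamma}\le\tilde{\rho}^{-\gamma}$, and hypothesis $H(f)(i)$ (valid since $0\le u/t\le\rho$) to bound the convection part by $|t^{p-1}f(z,u/t,Du)|\le\hat{a}_{\rho}(z)[1+|Du|^{p-1}]$, so that $|g_u(z)|\le\tilde{\rho}^{-\gamma}+\hat{a}_{\rho}(z)[1+|Du|^{p-1}]$ with all constants independent of $u$. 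The scaling inequality (\ref{eq5}) of $H(f)(iii)$ is exactly what renders the factor $t^{p-1}$ in front of $f$ harmless; in fact it even gives $g_u(z)\le u^{-\gamma}+f(z,u,Du)$, so each $u\in K(\sigma)$ is a subsolution of (\ref{eq1}), which is a convenient way to organise the estimate.

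I would then test the displayed equation with $h=u\in W^{1,p}(\Omega)$, apply Lemma~\ref{lem3} on the left, and use $u\le\rho$ and H\"older's inequality on the right to obtain
$$c_1||u||^p\le\vartheta(u)=\int_{\Omega}g_u(z)u\,dz\le c_{11}+c_{12}||Du||_p^{p-1}\le c_{11}+c_{12}||u||^{p-1},$$
with $c_{11},c_{12}>0$ independent of $u$. Since $p-1<p$, this bounds $K(\sigma)$ in $W^{1,p}(\Omega)$; combining $||u||_{\infty}\le\rho$ with the nonlinear regularity theorem of Lieberman \cite{15} applied to the above equation then yields $\alpha\in(0,1)$ and $M>0$ with $||u||_{C^{1,\alpha}(\overline{\Omega})}\le M$ for all $u\in K(\sigma)$, whence $K(\sigma)$ is bounded in $C^1(\overline{\Omega})$. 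By Theorem~\ref{th1} the map $\sigma$ has a fixed point $u^*\in C^1(\overline{\Omega})$, and $u^*=\sigma(u^*)\in S_{u^*}\subseteq D_+$ is a positive solution of (\ref{eq1}).

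The step I expect to be the main obstacle is precisely this a priori bound on $K(\sigma)$: one must correctly exploit (\ref{eq5}) to neutralise the parameter $t$, observe that the ordering $u\le w$ already delivers the $L^{\infty}$-bound, and control the convection term --- whose sub-natural $|Du|^{p-1}$ growth is exactly what lets the energy estimate close --- before bootstrapping to a uniform $C^1$-bound via nonlinear regularity.
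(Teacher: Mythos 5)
Your proof follows the same route as the paper: apply the Leray--Schauder alternative to the compact minimal solution map $\sigma$ (Proposition~\ref{prop9}) and bound $K(\sigma)$ by rewriting the fixed-point equation via the $(p-1)$-homogeneity of $-\Delta_p$, using (\ref{eq5}) to cancel the $t^{p-1}$ factor, then testing with $h=u$ and invoking Lemma~\ref{lem3} together with Lieberman's regularity theorem. In fact your energy estimate is slightly more careful than the paper's: inequality (\ref{eq71}) there appears to suppress the $|Du|^{p-1}$ growth from $H(f)(i)$, whereas you correctly retain $c_{12}\|Du\|_p^{p-1}\le c_{12}\|u\|^{p-1}$ on the right and close the bound using $p-1<p$.
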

\begin{proof}
	We consider the minimal solution map $\sigma: C^1(\overline{\Omega})\rightarrow C^1(\overline{\Omega})$. From Proposition \ref{prop9} we know that $\sigma(\cdot)$ is compact. Let
	$$K=\{u\in C^1(\overline{\Omega}):u=t\sigma(u),0<t<1\}.$$
	
	We claim that $K\subseteq C^1(\overline{\Omega})$ is bounded. So, let $u\in K$. We have
	$$\frac{1}{t}u=\sigma(u)\ \mbox{with}\ 0<t<1.$$
	
	Then
	\begin{eqnarray}\label{eq69}
		&&\left\langle A(u),h\right\rangle+\int_{\Omega}\xi(z)u^{p-1}hdz=t^{p-1}\int_{\Omega}\left[\frac{t^{\gamma}}{u^{\gamma}}+f(z,\frac{1}{t}u,Du)\right]hdz\\
		&&\mbox{for all}\ h\in W^{1,p}(\Omega).\nonumber
	\end{eqnarray}
	
	From (\ref{eq15}) (see hypothesis $H(f)(iii)$), we have
	\begin{equation}\label{eq70}
		f(z,\frac{1}{t}u(z),Du(z))\leq\frac{1}{t^{p-1}}f(z,u(z),Du(z))\ \mbox{for almost all}\ z\in\Omega.
	\end{equation}
	
	Using (\ref{eq70}) in (\ref{eq69}) and recalling that $\tilde{u}\leq u,\ 0<t<1$, we obtain
	\begin{equation}\label{eq71}
		\left\langle A(u),h\right\rangle+\int_{\Omega}\xi(z)u^{p-1}hdz\leq\int_{\Omega}\left[\frac{1}{\tilde{u}^{\gamma}}+\hat{a}_0(z)\right]hdz
	\end{equation}
	for all $h\in W^{1,p}(\Omega)$
	and
	 some $\hat{a}_0\in L^{\infty}(\Omega)$ (see hypothesis $H(f)(i)$).
	
	In (\ref{eq71}) we choose $h=u\in W^{1,p}(\Omega)$. Then
	\begin{eqnarray*}
		&&\vartheta(u)\leq c_{11}\ \mbox{for some}\ c_{11}>0\ (\mbox{recall}\ \tilde{u}\in D_+),\\
		&&c_1||u||^p\leq c_{11}\ \mbox{for all}\ u\in K\ (\mbox{see Lemma \ref{lem3}}),\\
		&\Rightarrow&K\subseteq W^{1,p}(\Omega)\ \mbox{is bounded}.
	\end{eqnarray*}
	
	Next, as before, the nonlinear regularity theory implies that
	$$K\subseteq C^1(\overline{\Omega})\ \mbox{is bounded (in fact, relatively compact)}.$$
	
	So, we can apply Theorem \ref{th1} (the Leray-Schauder principle) and produce $u^*\in C^1(\overline{\Omega})$ such that $u^*=\sigma(u^*)$. Therefore $u^*\in D_+$ is a positive smooth solution of problem (\ref{eq1}).
	The proof 
of Theorem~\ref{th10}
is now complete.
\end{proof}

\medskip
{\bf Acknowledgements.} The authors  thank the referee for corrections and remarks.
This research was supported by the Slovenian Research Agency grants
P1-0292, J1-8131, N1-0064, N1-0083, and N1-0114.

\end{document}